\newtheorem{theorem}{Theorem}[section]
\newtheorem{proposition}[theorem]{Proposition}
\newtheorem{lemma}[theorem]{Lemma}
\newtheorem{corollary}[theorem]{Corollary}
\theoremstyle{definition}
\newtheorem{definition}[theorem]{Definition}
\theoremstyle{remark}
\newtheorem{remark}[theorem]{Remark}
\numberwithin{equation}{section}
\newcommand{\set}[2]{\left\{\,#1~:~#2\,\right\}}
\DeclareMathOperator{\ind}{Ind}
\newcommand{\R}{\mathbb{R}}
\newcommand{\Lie}{\mathcal{L}}
\newcommand{\ve}{\varepsilon}
\pgfplotsset{compat=1.12}
\DeclareMathOperator{\diver}{div}
\begin{document}

\title{Morse-Smale characteristic foliations and convexity in contact manifolds}
\author{Joseph Breen}

\maketitle

\begin{abstract}
We generalize a result of Giroux which says that a closed surface in a contact $3$-manifold with Morse-Smale characteristic foliation is convex. Specifically, we show that the result holds in contact manifolds of arbitrary dimension. As an application, we show that a particular closed hypersurface introduced by A. Mori is $C^{\infty}$-close to a convex hypersurface. 
\end{abstract}

\tableofcontents

\section{Introduction}

In \cite{Giroux}, Giroux demonstrated the power of convex surface theory in three dimensional contact manifolds. Since then, convexity has been an effective tool in this setting; see for example \cite{Honda1}. Recently, a systematic development of convex hypersurface theory in arbitrary dimensions began in works such as \cite{HondaHuang2}, \cite{HondaHuang}, and \cite{Sackel}. The goal of this paper is to study further one aspect of convexity in higher dimensions.

In particular, one of Giroux's results in \cite{Giroux} is that a closed surface in a $3$ dimensional contact manifold with Morse-Smale characteristic foliation is convex. We recall the relevant definition. 

\begin{definition}\label{MSdef}
A vector field on an oriented manifold is \textbf{Morse-Smale} if the following conditions are satisfied: 
\begin{enumerate}[(i)]
    \item There are finitely many critical points and periodic orbits, each of which is hyperbolic (in the dynamical systems sense). 
    \item Every flow line limits to either a critical point or an orbit in both forward and backward time.
    \item The unstable manifold of any critical point or orbit is transverse to the stable manifold of any critical point or orbit. 
\end{enumerate}
A singular foliation is \textbf{Morse-Smale} if it is directed by a Morse-Smale vector field.
\end{definition}

In \cite{HondaHuang}, Honda and Huang adapted Giroux's argument to show that a hypersurface in a contact manifold of arbitrary dimension with so called Morse$^+$ characteristic foliation is convex. The Morse$^+$ hypothesis, which requires the existence of a Morse function for which the foliation is gradient-like, precludes the existence of periodic orbits in the characteristic foliation. Here, we generalize further to include the case where the foliation has periodic orbits. Our main result is the following.

\begin{theorem}\label{maintheorem}
Let $\Sigma^{2n} \subseteq (M^{2n+1},\xi= \ker \alpha)$ be a closed, oriented hypersurface with Morse-Smale characteristic foliation. Then $\Sigma$ is convex. 
\end{theorem}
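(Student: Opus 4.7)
The strategy is to follow Giroux's original dimension-3 argument and Honda-Huang's higher-dimensional Morse$^+$ generalization, producing a contact vector field on a neighborhood of $\Sigma$ transverse to $\Sigma$ out of the dynamical data of the Morse-Smale characteristic foliation. The crucial object to construct is a dividing set $\Gamma \subset \Sigma$, a codimension-1 submanifold transverse to the characteristic vector field $X$, together with an ideal Liouville structure on each component of $\Sigma \setminus \Gamma$ whose Liouville vector field directs $X$; convexity then follows from the Honda-Huang characterization of convexity in terms of such ideal Liouville decompositions.

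The first step is to convert the Morse-Smale data into a Lyapunov-type function. Using that $X$ is Morse-Smale, I would produce a Morse-Bott Lyapunov function $f \colon \Sigma \to \R$ whose critical set is exactly the chain-recurrent set of $X$ (the union of critical points and periodic orbits), with $X(f) < 0$ off this set; existence of such functions for Morse-Smale flows is standard (Meyer, Conley). Picking a regular value $c$, the hypersurface $\Gamma = f^{-1}(c)$ is transverse to $X$ and partitions $\Sigma$ into $\Sigma_+ = \{f > c\}$ containing the repelling basic sets and $\Sigma_- = \{f < c\}$ containing the attracting ones. This is the candidate dividing set.

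The second step is to produce Liouville forms $\beta_\pm$ on $\Sigma_\pm$ by patching local models guided by $f$. Near a hyperbolic critical point of $X$, one has a standard contact chart from which a local Liouville form with $X$ as its dual vector field can be extracted, as in the Honda-Huang treatment of the Morse$^+$ case. Near a periodic orbit $\gamma$ of $X$, the orbit is a closed Legendrian curve in $M$, so a standard neighborhood theorem provides a normal form for $(M,\xi)$ around $\gamma$; combined with hyperbolicity of the Poincar\'e return map along $\gamma$, this should yield explicit local coordinates in which a Liouville primitive can be written down and the directing vector field identified with (a positive rescaling of) $X$. Using $f$ as a time function and the Morse-Smale transversality condition (iii) to separate stable and unstable manifolds, these local forms can be interpolated across regular level sets of $f$ via standard Liouville deformation tricks, producing global $\beta_\pm$.

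The main obstacle will be the analysis around periodic orbits. In the Morse$^+$ setting of Honda-Huang, the Liouville vector field on each side has only isolated zeros at critical points of the Morse function; here it vanishes along the 1-dimensional periodic orbits, so the local Liouville model degenerates along a circle rather than at a point, and its compatibility with the ambient contact structure (in particular, that the ideal Liouville boundary behavior near $\Gamma$ is preserved through the patching) must be verified carefully. Once $\beta_\pm$ has been produced and checked to satisfy the ideal Liouville axioms, the Honda-Huang convexity criterion gives the required transverse contact vector field, completing the proof.
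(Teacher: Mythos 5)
Your high-level architecture (dividing set transverse to $X$, Liouville forms on the two sides, vertical invariance/Honda--Huang criterion) matches the paper's strategy, but the proposal is missing the two technical points that carry the actual content, and one of its steps is set up incorrectly. First, the decomposition of $\Sigma$ is not into the repelling versus the attracting part of the flow. The positive region of a convex hypersurface is a Liouville domain whose Liouville vector field directs $\Sigma_\xi$, and the zeros of a Liouville vector field on a $2n$-manifold can have any index $\leq n$; so $R^+(\Sigma)$ must contain all \emph{positive} critical points and orbits, including saddle-type ones, and $R^-(\Sigma)$ all negative ones --- positivity here being the contact-geometric sign (whether $g\beta$ is a positive or negative Liouville form near the point or orbit), not a dynamical attracting/repelling dichotomy. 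A single regular level $f^{-1}(c)$ of a generic Lyapunov function will not separate the positive basic sets from the negative ones unless you first know that no flowline runs from a negative critical point or orbit to a positive one. That no-flowline claim is the linchpin of the whole construction, and it is proved in the paper by combining Morse--Smale transversality (condition (iii)) with the index bound $\ind \leq n$ for positive critical points \emph{and for positive periodic orbits}. Your proposal never establishes the index bound for orbits, so the existence of your dividing level $c$ is unjustified.

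Second, the assertion that near a hyperbolic periodic orbit a ``standard neighborhood theorem'' plus hyperbolicity ``should yield'' a Liouville primitive is precisely the gap the paper fills with Proposition \ref{orbit_prop}, and it does not follow from a normal form alone. (Also, for $n>1$ a closed leaf of $\Sigma_\xi$ is an isotropic, not Legendrian, circle.) The paper's argument is genuinely contact-dynamical: the Poincar\'e return map $P$ of $\gamma$ is a contactomorphism of a transversal, so $P^*\lambda = f\lambda$, its linearization has the block form with diagonal entries $C=f(0)$ and $\sqrt{C}\,M'$ with $M'$ symplectic, whence $\det[T_0P]=C^n$ and $\diver X = n\log C \neq 0$ by hyperbolicity ($C\neq 1$); this is what shows $\gamma$ is Liouville and determines its sign. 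The same block form gives the eigenvalue pairing $\mu \leftrightarrow C\mu^{-1}$, which is what bounds the stable manifold of a positive orbit by dimension $n$ and feeds back into the no-flowline claim above. Without carrying out this analysis, both the placement of the orbits on the correct side of $\Gamma$ and the construction of the Liouville forms $\beta_\pm$ near the orbits are unsupported, so the proposal as written does not yet constitute a proof.
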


\begin{remark}
The $+$ in the Morse$^+$ hypothesis in \cite{HondaHuang} is the assumption that there are no trajectories from negative singularities to positive singularities. It will be evident from the proof of Theorem \ref{maintheorem} that the analogue of this assumption in Definition \ref{MSdef} is condition (iii). Also worth nothing is that Honda and Huang prove that a hypersurface with Morse characteristic foliation can be smoothly perturbed to have Morse$^+$ characteristic foliation.
\end{remark}

\begin{remark}
When $\dim M = 3$, Theorem \ref{maintheorem} (i.e., Giroux's original result) is especially powerful because Morse-Smale vector fields on $2$-manifolds are dense in the $C^{\infty}$-topology (see \cite{Palis} and the references within). This implies that a $C^{\infty}$-generic closed surface has Morse-Smale characteristic foliation, and thus is convex. Morse-Smale vector fields are \it not \rm $C^{\infty}$-dense in higher dimensions. 
\end{remark}

The proof of Theorem \ref{maintheorem} relies on an understanding of the induced $1$-form $\beta = \alpha\mid_{\Sigma}$ of a contact form $\alpha$ near periodic orbits. The terminology we will use in this paper is:

\begin{definition}\label{liouville_orbit}
Let $\beta := \alpha\mid_{\Sigma}$. A periodic orbit $\gamma$ in the characteristic foliation $\Sigma_{\xi}$ is \textbf{Liouville} if $g\beta$ is a Liouville form in a neighborhood of $\gamma$ for some smooth $g>0$. We say $\gamma$ is \textbf{positive Liouville} if $d(g\beta)^n > 0$ and \textbf{negative Liouville} if $d(g\beta)^n < 0$.
\end{definition}

\begin{remark}\label{divergence}
Here is a simple criterion for an orbit to be Liouville: pick any volume form $\Omega$ in a neighborhood of $\gamma$ and consider the vector field $X$ satisfying $i_X\Omega = \beta\, (d\beta)^{n-1}$ which directs the characteristic foliation. If $\diver_{\Omega}X \neq 0$, then $\gamma$ is Liouville. Indeed, 
$$
\diver_{\Omega}(X)\, \Omega = d\left(i_{X}\Omega\right) = d\left(\beta \, (d\beta)^{n-1}\right) = (d\beta)^n
$$
so that $d\beta$ is symplectic if $\diver_{\Omega}(X) \neq 0$. One may easily check that the sign of $\diver_{\Omega}(X)$ is independent of the choice of $\Omega$. 
\end{remark}

The proof that Morse$^+$ implies convexity relies on the fact that $\beta$ is a Liouville form in a neighborhood of a critical point of the characteristic foliation. Also important is the fact that the Morse index of a critical point of a Liouville vector field satisfies $\ind(p) \leq n$, where $2n$ is the dimension of the Liovuille manifold (see Proposition 11.9 of \cite{CE}). One of the main steps in proving Theorem \ref{maintheorem} is to show that hyperbolic periodic orbits exhibit the same behavior. 

\begin{proposition}\label{orbit_prop}
Let $\Sigma^{2n}\subseteq (M^{2n+1}, \xi = \ker \alpha)$ be an oriented hypersurface. If $\gamma$ is a hyperbolic periodic orbit in the characteristic foliation, it is Liouville. Furthermore, if $\gamma$ is positive Liouville then $\ind(\gamma) \leq n$. 
\end{proposition}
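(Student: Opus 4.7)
The proof has two parts.

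\textbf{Part 1 (Liouville property).} The key insight I would exploit is that the Poincar\'e map of the characteristic foliation is naturally a conformal contactomorphism of a transverse disk. Since $X$ directs the characteristic foliation, $L_X\beta = \lambda\beta$ for some function $\lambda$ (from $i_X\beta = 0$ and $i_Xd\beta = \lambda\beta$), and integrating along the flow yields $\phi_t^*\beta = \exp\left(\int_0^t\lambda\circ\phi_s\,ds\right)\beta$. On a transverse disk $D^{2n-1}$ at $p\in\gamma$, a short Darboux-style calculation shows that $\beta|_D$ is a contact form near $p$, and the return map $P$ satisfies $DP_p^*(\beta|_D)_p = e^{\oint\lambda}(\beta|_D)_p$. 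Hence $DP_p$ preserves $\ker(\beta|_D)_p$ and acts on the quotient line by multiplication by $e^{\oint\lambda}$, so this is an eigenvalue of $DP_p$; hyperbolicity of $\gamma$ forces it off the unit circle, yielding $\oint\lambda \neq 0$. Combining this with the identity $\diver_\Omega X = n\lambda$ (obtained by applying $i_X$ to $(d\beta)^n = \diver_\Omega(X)\,\Omega$) and the expansion $(d(g\beta))^n = g^{n-1}[g\,\diver_\Omega X + nX(g)]\Omega$, the Liouville condition along $\gamma$ reduces to finding positive periodic $G = g|_\gamma$ with $\lambda + (\log G)'$ of constant sign. Taking $(\log G)' = (\oint\lambda)/L - \lambda$ gives $\lambda + (\log G)' \equiv (\oint\lambda)/L \neq 0$, with $G$ automatically positive and $L$-periodic, and extending $g$ smoothly off $\gamma$ preserves non-degeneracy in a neighborhood by continuity.

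\textbf{Part 2 (index bound).} Assume $g\beta$ is positive Liouville; set $\omega := d(g\beta)$ and let $V$ be the Liouville vector field defined by $i_V\omega = g\beta$. Applying $i_V$ to this identity gives $\beta(V) = 0$, and using $\omega|_{\ker\beta} = g\,d\beta|_{\ker\beta}$ then forces $V \in \ker(d\beta|_{\ker\beta})$; so $V$ directs the characteristic foliation, and positivity of $\omega^n$ ensures $V = hX$ with $h > 0$, so the stable dynamics agrees with that of $X$. The bound on the index then follows from a standard isotropicity argument. The Liouville identity $(\phi_t^V)^*\omega = e^t\omega$ gives $(D\phi_{T_V}^V)^*\omega = e^{T_V}\omega$ on $T_p\Sigma$, so for stable eigenvectors $v_1,v_2$ of the Poincar\'e map $DP_V$ (lifted to $T_p\Sigma$) with eigenvalues $\mu_i$ of modulus $<1$, the identity $\mu_1\mu_2\,\omega(v_1,v_2) = e^{T_V}\omega(v_1,v_2)$ combined with $|\mu_1\mu_2| < 1 < e^{T_V}$ forces $\omega(v_1,v_2) = 0$. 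Hence the stable subspace is $\omega$-isotropic in $T_p\Sigma$, of dimension $\leq n$, and $\ind(\gamma) \leq n$.

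\textbf{Main obstacle.} The crux is Part 1: in general dynamical systems, hyperbolicity of a periodic orbit does not force $\det(DP) \neq 1$, so a na\"ive integrated-divergence argument would fail. The essential use of the contact-like structure of $\beta$ is in producing $e^{\oint\lambda}$ as a distinguished eigenvalue of $DP$, which is what converts dynamical hyperbolicity into the nonvanishing of $\oint\lambda$ needed to construct the rescaling $g$.
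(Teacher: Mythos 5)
Your Part 1 is correct, and it is essentially the paper's Step 1--2 in different clothing: your distinguished eigenvalue $e^{\oint\lambda}$ of $DP_p$ is exactly the conformal factor $C=f(0)$ of the contactomorphism $P$ in the paper, and hyperbolicity forcing it off the unit circle is the same pivot. Two of your choices are actually cleaner than the paper's: the pointwise identity $\diver_{\Omega}X=n\lambda$ replaces the paper's computation $\det[T_0P]=C^n$ via Hartman--Grobman and the symplecticity of the transverse block, and your explicit rescaling $(\log G)'=(\oint\lambda)/L-\lambda$ makes $\lambda+X(\log g)$ a nonzero constant along the whole orbit, whereas the paper only evaluates the divergence at the single point $\gamma\cap L$. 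This part I would accept as written.

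Part 2 has an off-by-one gap. The subspace you actually prove to be $\omega$-isotropic is the span $E^s$ of the stable (generalized) eigenvectors of the return map $DP_V$, and $\dim E^s=\ind(\gamma)-1$: the tangent space to the stable manifold of the orbit is $E^s\oplus\R V(p)$, the orbit direction supplying the extra dimension (this is the ``$+1$ for the orbit direction'' the paper tracks explicitly in its Step 3). So as written your argument yields only $\ind(\gamma)\le n+1$, which is not enough for the transversality count in the proof of Theorem \ref{maintheorem}, where one needs $\dim W^s(\gamma^+)+\dim W^u(x^-)\le 2n$ to exclude flow lines. The repair is one line and stays entirely within your framework: $V(p)$ is an eigenvector of $D\phi^V_{T_V}$ with eigenvalue $1$, so for a stable eigenvector $v$ with eigenvalue $\mu$ the same identity gives $1\cdot\mu\,\omega(V(p),v)=e^{T_V}\omega(V(p),v)$, and $|\mu|<1<e^{T_V}$ forces $\omega(V(p),v)=0$ (equivalently: stable vectors lie in $\ker(\beta|_D)$ because the quotient $T_pD/\ker(\beta|_D)$ carries the eigenvalue $C>1$, and $\omega(V,\cdot)=g\beta(\cdot)$). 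Then $E^s\oplus\R V(p)$ is isotropic for $\omega$, so $\ind(\gamma)\le n$. You should also note that the eigenvalue cancellation needs the usual care for complex and generalized eigenvectors, though that is standard. With this repair, your Part 2 is a genuinely different route from the paper's, which instead uses that eigenvalues of the conformally symplectic block $M=\sqrt{C}M'$ pair as $\mu\leftrightarrow C\mu^{-1}$; your isotropy argument is the direct periodic-orbit analogue of Proposition 11.9 of \cite{CE} for critical points and arguably explains better \emph{why} the bound holds, while the paper's is more elementary linear algebra.
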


With this and a few other ingredients, the proof of Theorem \ref{maintheorem} is a straightforward adaptation of Giroux's argument in three dimensions; see also the proof of Proposition 2.2.3 in \cite{HondaHuang}.

As an application of this convexity criterion, we provide some further analysis on a closed hypersurface $\Sigma_0$ introduced by Mori in \cite{Mori}. We will review the definition of $\Sigma_0$ in Section \ref{Applications}. In \cite{Mori} it was claimed that $\Sigma_0$ cannot be smoothly approximated by a convex hypersurface. Using Theorem \ref{maintheorem}, we will prove:

\begin{corollary}\label{maincorollary}
The closed hypersurface $\Sigma_0$ is $C^{\infty}$-close to a convex hypersurface. 
\end{corollary}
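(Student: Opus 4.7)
The plan is to deduce Corollary \ref{maincorollary} directly from Theorem \ref{maintheorem}: I would produce an explicit $C^{\infty}$-small perturbation $\Sigma_0'$ of $\Sigma_0$ whose characteristic foliation is Morse-Smale, at which point convexity of $\Sigma_0'$ follows immediately. Since Mori's claim that $\Sigma_0$ admits no nearby convex hypersurface is what we are refuting, the work lies in locating and repairing the specific obstruction to Morse-Smale.

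First I would revisit Mori's construction in Section \ref{Applications} and write out an explicit parametrization of the characteristic foliation $(\Sigma_0)_\xi$, cataloging all singular points and closed orbits together with their local dynamics. I expect $\Sigma_0$ to be assembled from a small number of standard local models, making the foliation amenable to direct computation. Mori's obstruction should then translate into a concrete failure of one (or more) of the conditions in Definition \ref{MSdef}: a degenerate closed orbit violating hyperbolicity, or a saddle connection violating the transversality condition (iii). Second, I would construct a repair by an explicit $C^{\infty}$-small normal variation of $\Sigma_0$. Such a variation induces a $C^{\infty}$-small change in $\beta = \alpha|_{\Sigma_0}$ and hence in the characteristic vector field $X$. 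Working in Darboux-type charts localized near the offending invariant sets, I would first choose bump functions producing hyperbolicity of each critical point and closed orbit (condition (i)), and then a further local perturbation arranging transversality of stable and unstable manifolds (condition (iii)). Condition (ii) is then automatic once the invariant sets are finitely many and hyperbolic. Proposition \ref{orbit_prop} ensures that the resulting hyperbolic orbits are Liouville, which is what Theorem \ref{maintheorem} ultimately needs.

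The main obstacle is twofold. First, the perturbations of the characteristic vector field must be realized by genuine hypersurface displacements rather than by abstract vector field perturbations on $\Sigma_0$; this requires checking that the relevant space of normal variations is large enough to surject onto the necessary perturbations of $X$ in each local chart, which one can typically verify by noting that the normal direction in $M$ pairs nontrivially with $\alpha$ away from the contact tangencies. Second, because Morse-Smale vector fields are \emph{not} $C^\infty$-dense in higher dimensions, genericity is not available as a black box; the perturbation must be constructed by hand, and successive steps must not undo the conditions already achieved. Since each failure of Morse-Smale in $(\Sigma_0)_\xi$ should be confined to a compact piece of $\Sigma_0$, the perturbations can be taken compactly supported in disjoint neighborhoods, making the argument tractable. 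Once the perturbed foliation is Morse-Smale, Theorem \ref{maintheorem} delivers convexity of $\Sigma_0'$ and completes the proof.
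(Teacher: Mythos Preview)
Your overall strategy---perturb $\Sigma_0$ so that its characteristic foliation becomes Morse--Smale and then invoke Theorem~\ref{maintheorem}---is exactly the paper's approach. The gap is in your diagnosis of the obstruction and, correspondingly, in the proposed repair.

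You anticipate that the failure of Morse--Smale will be ``a degenerate closed orbit violating hyperbolicity, or a saddle connection.'' In fact the obstruction is worse: above the hyperbolic zero $p$ of the projected vector field $\tilde X$ on the quarter ellipsoid $\tilde\Sigma_0$, the preimage $P^{-1}(p)\cong S^1\times S^{2n-3}$ is foliated by a $(2n-3)$-parameter family of parallel (completely degenerate) periodic orbits. This is not a finite collection of bad orbits that can be fixed one at a time with disjoint bump functions; a generic normal variation will replace the linear foliation on $S^1\times S^{2n-3}$ by essentially arbitrary dynamics on the transversal $S^{2n-3}$, and since Morse--Smale is not $C^\infty$-dense above dimension~$2$ there is no reason this will be Morse--Smale.

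The paper's repair is a specific construction, not a genericity argument. One straightens a neighborhood of $P^{-1}(p)$ to the model $\R_t\times[0,1]_s\times U\times L$ with contact form $t\,ds+\lambda$ and $L=S^{2n-3}$, chooses a $C^\infty$-small contact Hamiltonian $H:L\to\R$ whose contact vector field $X_H$ is gradient-like for a Morse function on $L$ (this uses Giroux's theorem that every contact manifold admits such a pair), and replaces $\{t=0\}$ by $\{t=H\}$. Lemma~\ref{compute_perturb} then computes that the new characteristic foliation is directed by $\partial_s-X_H$, which has finitely many hyperbolic periodic orbits corresponding to the Morse critical points on $L$; structural stability of $\tilde X$ at $p$ preserves the transverse hyperbolicity. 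Your proposal would succeed once you replace ``bump functions producing hyperbolicity of each closed orbit'' with this contact-Hamiltonian perturbation; without it the key step is missing.
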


\begin{remark}
We emphasize that our work only shows that the \textit{closed} hypersurface $\Sigma_0$ can be smoothly approximated by a convex hypersurface. In \cite{Mori}, Mori also introduces a hypersurface with contact type boundary and states a conjectural Thurston-Bennequin-like inequality for convex hypersurfaces; see also \cite{Mori2}. Theorem \ref{maintheorem} and the proof of Corollary \ref{maincorollary} do not apply to the hypersurface with boundary, or disprove the conjectured inequality. 
\end{remark}

This paper is organized as follows. Section \ref{background} contains the necessary background material on characteristic foliations and convexity in contact manifolds, as well as some notions from dynamical systems. In Section \ref{proof}, Theorem \ref{maintheorem} is proved. Specifically, we prove Proposition \ref{orbit_prop} and use this to prove Theorem \ref{maintheorem}. Section \ref{Applications} contains the analysis of Mori's example.

\vspace{2mm}
\noindent \textbf{Acknowledgements}. The author would like to thank Ko Honda for numerous helpful ideas and patient suggestions, as well as Atsuhide Mori for an insightful correspondence.

\section{Background material}\label{background}

We assume familiarity with basic contact and symplectic geometry; we relegate further details to \cite{Geiges}. In this paper, all of our contact manifolds are oriented and our contact structures are co-oriented.

\begin{definition}
If $\Sigma$ is a hypersurface in a contact manifold $(M, \xi = \ker \alpha)$, the \textbf{characteristic foliation} is the singular $1$-dimensional foliation 
$$
\Sigma_{\xi} = \left(T\Sigma \cap \xi\mid_{\Sigma}\right)^{\bot}
$$
where $\bot$ is the symplectic orthogonal complement taken with respect to the conformal symplectic structure on $\xi$. If $\beta := \alpha\mid_{\Sigma}$, then 
$$
\Sigma_{\xi} = \ker \left(d\beta \mid_{\ker \beta}\right). 
$$
\end{definition}

If $\Sigma$ is oriented, $\Sigma_{\xi}$ inherits a nautral orientation. In this case, a convenient way to compute the characteristic foliation on an orientable hypersurface $\Sigma^{2n}\subset M^{2n+1}$ is given by Lemma 2.5.20 in \cite{Geiges}.

\begin{lemma}\cite{Geiges}\label{cflemma}
Let $\beta = \alpha\mid_{\Sigma}$ and let $\Omega$ be a volume form on $\Sigma$. The characteristic foliation $\Sigma_{\xi}$ is directed by the vector field $X$ satisfying 
\begin{equation}
i_X \Omega = \beta\, (d\beta)^{n-1}.    
\end{equation}
\end{lemma}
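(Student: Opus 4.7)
The plan is to verify directly that the vector field $X$ defined implicitly by $i_X \Omega = \beta \wedge (d\beta)^{n-1}$ satisfies the two conditions characterizing $\Sigma_{\xi} = \ker(d\beta\mid_{\ker \beta})$ at each point: namely $\beta(X) = 0$ and $d\beta(X, Y) = 0$ for every $Y \in \ker \beta$.

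The first condition is essentially a Leibniz trick. Since $\beta \wedge \Omega$ is a $(2n+1)$-form on a $2n$-manifold, it vanishes for degree reasons, so
$$ 0 = i_X(\beta \wedge \Omega) = \beta(X)\, \Omega - \beta \wedge i_X\Omega. $$
On the other hand, $\beta \wedge i_X\Omega = \beta \wedge \beta \wedge (d\beta)^{n-1} = 0$, and hence $\beta(X) = 0$.

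For the second condition, I would argue that at a nonsingular point $p$ (where $\xi_p \neq T_p\Sigma$) the $(2n-1)$-form $\beta \wedge (d\beta)^{n-1}$ is nonzero and has kernel precisely the characteristic line $L := \ker(d\beta\mid_{\ker \beta})$. The inclusion $L \subseteq \ker(\beta \wedge (d\beta)^{n-1})$ is a short pointwise computation: if $v \in L$ then $i_v d\beta$ annihilates $\ker \beta$, so at $p$ it is a scalar multiple of $\beta$, and the expansion
$$ i_v(\beta \wedge (d\beta)^{n-1}) = -(n-1)\, \beta \wedge (i_v d\beta) \wedge (d\beta)^{n-2} $$
vanishes because $\beta \wedge \beta = 0$. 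The reverse inclusion requires showing the form is nonzero at $p$: since $V := T_p\Sigma \cap \xi_p$ is a codimension-one (hence coisotropic) subspace of the symplectic vector space $(\xi_p, d\alpha\mid_{\xi_p})$, the restriction $d\beta\mid_V$ has constant rank $2n-2$. Choosing any vector $e_0$ transverse to $\ker\beta$ with $\beta(e_0) = 1$ and a symplectic basis $f_1, \dots, f_{2n-2}$ of a complement of $L$ in $V$, one checks that $\beta \wedge (d\beta)^{n-1}$ evaluated on $(e_0, f_1, \dots, f_{2n-2})$ reduces to $(d\beta)^{n-1}(f_1, \dots, f_{2n-2}) \neq 0$. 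Since any nonzero $(2n-1)$-form on a $2n$-dimensional space has one-dimensional kernel, we conclude $\ker(\beta \wedge (d\beta)^{n-1}) = L$, and consequently $X_p \in L$.

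At singular points $\beta_p = 0$ forces $i_{X_p}\Omega = 0$ and hence $X_p = 0$, matching the singular set of $\Sigma_{\xi}$. I expect the conceptual heart of the argument to be the coisotropicity observation that $d\beta\mid_{\ker \beta}$ has constant maximal rank $2n-2$ at every nonsingular point, since this is what guarantees the defining $(2n-1)$-form is nonvanishing exactly where the characteristic line is well-defined.
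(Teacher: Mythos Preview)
Your argument is correct. The paper does not actually prove this lemma; it is simply stated with a citation to Geiges's textbook (Lemma~2.5.20 there), so there is no ``paper's own proof'' to compare against. What you have written is a complete and careful direct verification, and it is essentially the standard linear-algebraic argument one finds in Geiges: use $i_X i_X\Omega = 0$ (implicitly, via your Leibniz expansion) to see that $X$ annihilates $\beta\wedge(d\beta)^{n-1}$, identify the kernel of this top-minus-one form with the characteristic line by exploiting that $\ker\beta$ is a coisotropic hyperplane in the symplectic space $(\xi_p, d\alpha)$, and check singular points separately. The one step you leave slightly implicit is why $X_p$ lies in $\ker\bigl(\beta\wedge(d\beta)^{n-1}\bigr)$ at all---this is just $i_X\bigl(\beta\wedge(d\beta)^{n-1}\bigr) = i_X i_X\Omega = 0$---so you might state that explicitly before invoking the kernel identification.
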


In three dimensional contact manifolds, the characteristic foliation alone determines the contact germ near a hypersurface \cite{Giroux}. In higher dimensions we have the following weaker fact. 

\begin{lemma}\cite{HondaHuang}\label{conformallemma}
Let $(M, \xi_i = \ker \alpha_i)$ for $i=0,1$ be two contact structures on the same manifold. Let $\beta_i = \alpha_i\mid_{\Sigma}$ and suppose that $\beta_0 = g\beta_1$ for some $g > 0$. Then there is an isotopy $\phi_s:M\to M$ such that $\phi_s(\Sigma) = \Sigma$, $\phi_0 = \text{id}_M$, and $(\phi_1)_*(\xi_0) = \xi_1$ in a neighborhood of $\Sigma$. 
\end{lemma}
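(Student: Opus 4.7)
The plan is a relative Gray stability argument. I will produce the isotopy $\phi_s$ as the flow of a time-dependent vector field $X_t$ that vanishes identically on $\Sigma$, forcing $\phi_s|_\Sigma = \mathrm{id}_\Sigma$ and in particular $\phi_s(\Sigma) = \Sigma$.

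The first step is to normalize so that $\alpha_0 = \alpha_1$ as $1$-forms on $T_pM$ at every $p \in \Sigma$ (not merely on $T_p\Sigma$). Multiplying a contact form by a positive function does not change its contact structure, so replacing $\alpha_1$ by $\tilde g \alpha_1$ for a smooth positive extension $\tilde g$ of $g$ gives $\alpha_1|_{T\Sigma} = \alpha_0|_{T\Sigma}$. To upgrade this to pointwise agreement on $T_pM$, I would apply a diffeomorphism of $M$ fixing $\Sigma$ pointwise that sends $\xi_1$ to $\xi_0$ along $\Sigma$---constructed as the flow of a vector field transverse to $\Sigma$, tuned at each $p \in \Sigma$ to send a chosen transverse vector of $\xi_1(p)$ to one congruent to a transverse vector of $\xi_0(p)$ modulo $T_p\Sigma$---and then rescale $\alpha_1$ by one more positive function. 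After these steps, $\alpha_0$ and $\alpha_1$ agree at every point of $\Sigma$.

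Next, form the straight-line path $\alpha_t = (1-t)\alpha_0 + t\alpha_1$. At each $p \in \Sigma$, $\alpha_t(p) = \alpha_0(p)$, and $d\alpha_t|_{\xi_0(p)}$ is a convex combination of the symplectic forms $d\alpha_i|_{\xi_0(p)}$ on the common hyperplane $\xi_0(p) = \xi_1(p)$, both compatible with the canonical orientation induced by the common co-orientation together with the ambient orientation; hence $\alpha_t$ is contact at $p$, and by openness on a neighborhood of $\Sigma$ for all $t\in[0,1]$. Now solve the standard Moser/Gray equation
\begin{equation*}
\dot\alpha_t + i_{X_t}\, d\alpha_t = \lambda_t\, \alpha_t, \qquad X_t \in \xi_t,
\end{equation*}
by setting $\lambda_t = \dot\alpha_t(R_t)$ (with $R_t$ the Reeb field of $\alpha_t$) and using non-degeneracy of $d\alpha_t|_{\xi_t}$ to determine $X_t$ uniquely. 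Because $\dot\alpha_t = \alpha_1 - \alpha_0$ vanishes identically on $\Sigma$, both $\lambda_t$ and $i_{X_t}d\alpha_t$ vanish on $\Sigma$, so $X_t \equiv 0$ on $\Sigma$. Integrating $X_t$ produces the desired isotopy $\phi_s$ with $\phi_s|_\Sigma = \mathrm{id}_\Sigma$, $\phi_0 = \mathrm{id}_M$, and $(\phi_1)_*\xi_0 = \xi_1$ in a neighborhood of $\Sigma$.

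The main obstacle is the normalization step. The hypothesis $\beta_0 = g\beta_1$ only controls the pullbacks to $T\Sigma$, whereas the Moser argument needs $\dot\alpha_t \equiv 0$ along $\Sigma$, and this requires the stronger equality on all of $T_pM$. Bridging that gap means matching the transverse components of $\alpha_0$ and $\alpha_1$ along $\Sigma$, i.e.\ aligning the full ambient contact hyperplanes $\xi_0$ and $\xi_1$ there---the standard-but-delicate move on which this relative version of Moser's trick hinges.
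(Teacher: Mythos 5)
The paper does not prove this lemma itself (it quotes it from Honda--Huang), so I am measuring your argument against the standard proof. Your overall plan --- linear interpolation of contact forms plus a relative Gray/Moser argument --- is the right one, but two steps as written do not hold up. The more serious one is your justification that $\alpha_t$ is contact along $\Sigma$: a convex combination of two symplectic forms on a vector space of dimension $\geq 4$ inducing the same orientation need \emph{not} be symplectic. On $\R^4$ take $\omega_0 = dx_1\wedge dy_1 + dx_2\wedge dy_2$ and $\omega_1 = -\omega_0$; then $\omega_1^2 = \omega_0^2$, so they induce the same orientation, yet the midpoint of the segment is zero. Since the paper's whole point is $\dim\xi = 2n \geq 4$, the inference ``both compatible with the canonical orientation, hence $\alpha_t$ is contact'' is invalid exactly in the cases of interest. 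The conclusion is nevertheless true, for a reason you have available but do not invoke: $d\alpha_0$ and $d\alpha_1$ both restrict to $d\beta$ on $T_p\Sigma$, hence agree on the hyperplane $\xi(p)\cap T_p\Sigma$ of $\xi(p)$. Their difference therefore has the form $e^*\wedge\eta$ and squares to zero, so $(d\alpha_t)^n|_{\xi(p)}$ --- equivalently $\alpha_t\wedge(d\alpha_t)^n$ at $p$ --- is an \emph{affine} function of $t$ with values in a one-dimensional space, nonzero with the same sign at $t=0,1$, hence nonzero for all $t$. This is the computation you need to supply.

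The second problem is the normalization step, which you correctly identify as the crux but leave incomplete --- and which is in fact unnecessary. Your recipe for the shear (matching transverse vectors of $\xi_1(p)$ and $\xi_0(p)$ modulo $T_p\Sigma$) degenerates at the singular points of the characteristic foliation, where $\xi_i(p) = T_p\Sigma$ and no transverse vector of $\xi_i(p)$ exists; producing a smooth vector field across that locus is precisely the delicate point you defer. The standard argument avoids it entirely: the lemma only asks for $\phi_s(\Sigma) = \Sigma$ as a set, not $\phi_s|_{\Sigma} = \mathrm{id}$, so one needs $X_t$ tangent to $\Sigma$, not vanishing there. After rescaling $\alpha_1$ so that $\beta_0 = \beta_1$, the form $\dot\alpha_t = \alpha_1 - \alpha_0$ vanishes on $T\Sigma$ (though not on $T_pM$), and the Moser equation forces $i_{X_t}d\alpha_t$ to vanish on $\xi_t(p)\cap T_p\Sigma$; thus $X_t(p)$ lies in the $d\alpha_t$-symplectic orthogonal of that hyperplane of $\xi_t(p)$, which, being the orthogonal of a coisotropic subspace, is contained in $\xi_t(p)\cap T_p\Sigma \subset T_p\Sigma$. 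Hence the Moser flow automatically preserves $\Sigma$, and no pointwise matching of $\alpha_0$ and $\alpha_1$ on $T_pM$ along $\Sigma$ is required. If you repair the contactness computation as above and drop the normalization in favor of this tangency observation, your proof becomes the standard one.
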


Any submanifold of $\Sigma$ transverse to the characteristic foliation is a contact submanifold of $M$. Furthermore, flowing along the characteristic foliation induces a contactomorphism of the transversal.

\begin{definition}
A \textbf{contact vector field} $V$ in a contact manifold $(M,\xi= \ker \alpha)$ is one whose flow $\phi_t:M\to M$ is a contactomorphism for all $t$. 
\end{definition}

There is a one-to-one correspondence between contact vector fields $V$ and ``contact Hamiltonian functions'' $C^{\infty}(M)$, see Section 2.3 of \cite{Geiges}. Given $H\in C^{\infty}(M)$, the corresponding contact vector field is determined uniquely by the conditions 
\begin{equation}\label{cvf}
\alpha(X_H) = H \quad \quad \text{ and } \quad \quad i_{X_H}d\alpha = dH(R_{\alpha})\, \alpha - dH.    
\end{equation}
A vector field $V$ is contact if and only if $\Lie_V\alpha = g\alpha$ for some smooth $g:M\to \R$. The Reeb vector field $R_{\alpha}$ is an example of a contact vector field. 

\begin{definition}
A hypersurface $\Sigma\subset (M,\xi)$ is \textbf{convex} if there is a contact vector field $V$ everywhere transverse to $\Sigma$. 
\end{definition}  

One can characterize convexity at the differential form level as follows. 

\begin{lemma}\cite{Giroux}
An embedded oriented  hypersurface $\Sigma$ is convex if and only if there is an neighborhood $\Sigma \times \R$ of $\Sigma=\Sigma\times \{0\}$ in $M$ such that $\xi = \ker(u\, dt + \beta)$, where $t$ is the $\R$-coordinate, $\beta$ is a ($t$-independent) $1$-form on $\Sigma$, and $u$ is a ($t$-independnet) function $u:\Sigma \to \R$. 
\end{lemma}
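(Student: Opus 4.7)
The plan is to prove both directions by leveraging the flow of the transverse contact vector field to produce a product neighborhood, then to exploit the conformal flexibility of contact forms to make things $t$-invariant.

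For the reverse direction, which is the easier half, I would simply observe that if $\xi = \ker(u\, dt + \beta)$ on $\Sigma \times \R$ with $u$ and $\beta$ independent of $t$, then $V = \partial_t$ is everywhere transverse to $\Sigma \times \{0\}$ and its Lie derivative $\Lie_{\partial_t}(u\, dt + \beta) = 0$, so $V$ preserves the contact form (hence the contact structure) strictly. Thus $V$ witnesses the convexity of $\Sigma$.

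For the forward direction, suppose $V$ is a contact vector field transverse to $\Sigma$ with flow $\phi_t$. First I would use this flow to build a tubular neighborhood diffeomorphism $\Sigma \times (-\ve, \ve) \to M$ sending $(p,t) \mapsto \phi_t(p)$, identifying $V$ with $\partial_t$; transversality guarantees this is a local diffeomorphism near $\Sigma$. The resulting chart may then be extended to $\Sigma \times \R$ by a radial reparametrization of $t$, a cosmetic step. Under this identification, $V$ being contact means $\Lie_V \alpha = g\alpha$ for some smooth function $g$ on the neighborhood.

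The main step is to replace $\alpha$ by a conformally rescaled $V$-invariant contact form $\alpha' = f\alpha$ defining the same contact structure, so that we can decompose $\alpha' = u\, dt + \beta$ with $u$ and $\beta$ automatically $t$-independent. The equation $\Lie_V(f\alpha) = (V(f) + fg)\alpha = 0$ becomes the first-order linear ODE $V(f) = -fg$ along the integral curves of $V$. Since the flow lines foliate $\Sigma \times \R$ and each is parametrized by $t$, this ODE has a unique global positive solution $f > 0$ with any prescribed positive initial data on $\Sigma \times \{0\}$. With $\alpha' = f\alpha$ now $V$-invariant and defining $u := \alpha'(\partial_t)$ and $\beta := \iota^*\alpha'$ (where $\iota_t:\Sigma \to \Sigma \times \R$ is inclusion at height $t$), one has $\alpha' = u\, dt + \beta$; the $V$-invariance of $\alpha'$ translates directly to $\partial_t u = 0$ and $\partial_t \beta = 0$, and any expected concern (such as whether $u$ and $\beta$ literally extend to be functions only of $\Sigma$) is resolved by pulling back along the projection $\Sigma \times \R \to \Sigma$. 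The only place that requires real care is solving the ODE globally on $\Sigma \times \R$ and ensuring $f$ stays strictly positive, but this is immediate from the explicit exponential formula $f(p,t) = f(p,0)\exp\bigl(-\int_0^t g(\phi_s(p))\, ds\bigr)$.
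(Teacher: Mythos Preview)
The paper does not actually prove this lemma; it is stated with a citation to \cite{Giroux} and used as background without argument. Your proof is correct and is essentially the standard one: for the backward direction, $\partial_t$ is manifestly a transverse contact vector field; for the forward direction, one uses the flow of the transverse contact vector field to build a product neighborhood, then solves the linear ODE $\partial_t f = -fg$ along flow lines to conformally adjust the contact form to be $\partial_t$-invariant, whence the decomposition $\alpha' = u\,dt + \beta$ with $t$-independent $u$ and $\beta$ follows immediately from $\Lie_{\partial_t}\alpha' = 0$. The only minor caveat is that the extension from $\Sigma\times(-\ve,\ve)$ to $\Sigma\times\R$ by reparametrization, while harmless for the contact-geometric content, tacitly assumes $\Sigma$ is compact (or at least that the flow exists for a uniform time near $\Sigma$), which is satisfied in the paper's setting of closed hypersurfaces.
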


Note that \it any \rm $1$-form on $\R\times \Sigma$ can be written $u_t\, dt + \beta_t$ for some family of smooth functions $u_t:\Sigma \to \R$ and family of $1$-forms $\beta_t$ on $\Sigma$. Convexity requires a form which is $t$-invariant. A convex hypersurface is naturally divided into three regions in the following way. Write $\alpha = u\, dt + \beta$ near $\Sigma$. Then
$$
R^+(\Sigma) = \{u > 0\} \quad \text{ and }\quad  R^-(\Sigma) = \{u < 0\} 
$$
are the positive and negative region, respectively, and $\Gamma = \{u = 0\}$ is a codimension 1 submanifold of $\Sigma$ called the dividing set. The dividing set (which depends on the choice of contact vector field) is well-defined up to isotopy of dividing sets.

Next, we recall one definition from symplectic geometry. 

\begin{definition}
A \textbf{Liouville form} on a symplectic manifold $(W,\omega)$ is a $1$-form $\beta$ such that $\omega = d\beta$. The vector field $X$ such that $i_X\omega = \beta$ is the \textbf{Liouville vector field} of $\beta$. 
\end{definition}

In a convex hypersurface, $R^+(\Sigma)$ and $R^-(\Sigma)$ inherit a Liouville structure from $\beta = \alpha\mid_{R^{\pm}(\Sigma)}$. If $X$ denotes the Liouville vector field (for either $R^+(\Sigma)$ or $R^-(\Sigma)$), then the characteristic foliation on $R^+(\Sigma)$ is directed by $X$ and the characteristic foliation on $R^-(\Sigma)$ is directed by $-X$.

Finally, the dynamical systems notion of hyperbolicity will be central in what follows. We refer to \cite{Palis} for more details.

\begin{definition}
Let $\gamma$ be a periodic orbit of a vector field $X$, and let $L$ be a transversal to $X$ which intersects $\gamma$ once. The \textbf{Poincare first return map} is the map $P:V\subset L\to L$ defined by following the trajectories of $X$ from some open subset $V$ of $L$ to their first point of return to $L$. The orbit $\gamma$ is \textbf{hyperbolic} if the eigenvalues $\mu$ of $TP$ satisfy $0< |\mu| \neq 1$.
\end{definition}

\section{Proof of Theorem \ref{maintheorem}}\label{proof}

As alluded to in the introduction, we begin by proving Proposition \ref{orbit_prop}, which allows us to definitively place an orbit in either the positive or negative region.

\begin{proof}[Proof of Proposition \ref{orbit_prop}.]
The general strategy of the proof is to show that the divergence of a vector field directing the characteristic foliation near the hyperbolic periodic orbit $\gamma$ is nonzero. By Remark \ref{divergence}, this proves that $\gamma$ is Liouville.

\vspace{2mm}
\indent \underline{Step 1: Analyzing the differential of the Poincare first-return map}.
\vspace{1mm}

Let $L$ be a transversal to the periodic orbit and $V\subset L$ an open subset diffeomorphic to $\R^{2n-1}$ containing $\{0\} = \gamma \cap L$ such that the Poincare first-return map $P:V\to L$ is defined. Let $\lambda = \alpha\mid_L$ be the induced contact form on $L$. Because $P$ is defined by following the trajectories of the flowlines of $\Sigma_{\xi}$, $P$ is a contactomorphism. Thus, $P^*\lambda = f\lambda$ for some $f > 0$. 

Next, we compute a matrix representative for $T_0P:T_0V\to T_0L$. Let $R = R_{\lambda}(0)$, the Reeb vector field for $\lambda$ at $0$, and let $\{R, v_1, \dots, v_{2n-2}\}$ be a basis for $T_0V$ such that $\{v_1, \dots, v_{2n-2}\}$ is a symplectic basis for $\ker \lambda$ with respect to the symplectic structure induced by $d\lambda$. Write $T_0P(R) = C\, R + v$ for some constant $C$ and some $v\in \ker \lambda$. Since $P$ is a contactomorphism, $\ker \lambda$ is invariant under $P$. Thus, with respect to the above basis we have 
$$
[T_0P] = \begin{pmatrix}
C & 0 \\
\ast & M
\end{pmatrix}
$$
where $0$ is a $1\times (2n-2)$ matrix of zeroes and $\ast$ is a $(2n-2) \times 1$ matrix determined by $T_0P(v)$. Since 
$$
C = \lambda(C\, R + v) = \lambda(T_0P(R)) = P^*\lambda(R) = f(0)
$$
it follows that one of the eigenvalues of $T_0P$ is $C = f(0)$. The assumption that $\gamma$ is hyperbolic is precisely the assumption that the eigenvalues of $T_0P:T_0V\to T_0L$ satisfy $|\mu|\neq 1$ (and $\mu\neq 0$). Thus, $C>1$ or $0<C<1$.

Next, since $P^*d\lambda = df\, \lambda + f\, d\lambda$, 
$$
P^*d\lambda\mid_{\ker\lambda} = f(0)\, d\lambda\mid_{\ker\lambda}.
$$
This implies that $M^TJ_0M = C\, J_0$, where $J_0$ is the skew-symmetric matrix corresponding to the symplectic structure on $\ker \lambda$. Let $M' = C^{-\frac{1}{2}}M$. Then 
$$
(M')^TJ_0(M') = C^{-1} M^TJ_0M = J_0
$$
so that $M'$ is a symplectic matrix. Thus, to summarize Step 1: 
\begin{equation}\label{poincare_matrix}
[T_0P] = \begin{pmatrix}
C & 0 \\
\ast & \sqrt{C}\, M'
\end{pmatrix}    
\end{equation}
where either $0<C<1$ or $C>1$, and $M'$ is a symplectic matrix.

\vspace{2mm}
\indent \underline{Step 2: Determining the divergence of the characteristic foliation}.
\vspace{1mm}

Let $X$ be a vector field directing the characteristic foliation near $\gamma$. Let $\theta$ be a coordinate on $\gamma$. By considering a volume form $\Omega = d\theta\, \Omega'$ where $\Omega$ is a (possibly $\theta$-dependent) volume form in the transverse direction, we may assume that $X = \partial_{\theta} + Y$ where $Y$ is a vector field in the transverse direction which has a hyperbolic zero at $0$. By Remark \ref{divergence}, to show that $\gamma$ is Liouville it suffices to show that $\diver(Y) (= \diver(X))$ is nonzero along $\gamma$.

Reparametrizing if necessary, we may further assume that $P(x) = \phi_t(x)$, where $\phi_1$ is the flow of $Y$. By the Hartman-Grobman theorem (see Section 2.4 of \cite{Palis}), in a small neighborhood of $0$ it is sufficient to consider the flow of the linearization of $Y$, which we denote by $Ax$. Here $x\in \R^{2n-1}$ and $A$ is a square matrix. 

Note that $(\diver Y)(0) = \text{tr} A(0)$. Because $P(x) = \phi_1(x)$, by standard linear dynamical systems theory it follows that $[T_0P] = e^{A(0)}$. Since $\det(e^A) = e^{\text{tr} A}$, 
$$
(\diver Y)(0) = \text{tr} A(0) = \log \det [T_0P].
$$
Since the determinant of any symplectic matrix is $1$, \eqref{poincare_matrix} implies $\det [T_0P] = C\cdot \sqrt{C}^{2n-2} = C^n$. Thus, 
$$
(\diver Y)(0) = n\log C.
$$
Since $C>1$ or $0<C<1$, this shows that $\diver X\neq 0$ in a sufficiently small neighborhood of $\gamma$. 

This proves that a hyperbolic orbit is Liovuille. In particular, if $f(0) > 1$ then $\gamma$ is positive Liouville and if $f(0) < 1$ then $\gamma$ is negative Liouville.

\vspace{2mm}
\indent \underline{Step 3: Computing the index of a positive orbit}.
\vspace{1mm}

Suppose that $\gamma$ is a positive hyperbolic orbit. Consider $[T_0P]$ as in \eqref{poincare_matrix}. Since $\gamma$ is positive, $C > 1$. Let $E^-$ denote the subspace of generalized eigenvectors with eigenvalues of modulus $<1$. We claim that $\dim E^- \leq n-1$. The final claim in the proposition then follows, as the dimension of the stable manifold is $\leq (n-1) + 1 = n$ after accounting for the orbit direction. 

It is a standard fact (see, for example, \cite{MS}) that if $\mu'$ is an eigenvalue of a symplectic matrix $M'$, then $(\mu')^{-1}$ is also an eigenvalue with the same multiplicity. This implies that if $\mu$ is an eigenvalue of $M = \sqrt{C}\, M'$, then $C\mu^{-1}$ is also an eigenvalue of $M$ with equal multiplicity. In particular, if $|\mu|<1$ then $|C\mu^{-1}| > 1$. Thus, there are at most $n-1$ eigenvalues of $M$ with modulus less than $1$, which proves the claim. 

\end{proof}

Now we can adapt the arguments in \cite{Giroux} and \cite{HondaHuang} to prove that a Morse-Smale characteristic foliation is sufficient for convexity in arbitrary dimensions.

\begin{proof}[Proof of Theorem \ref{maintheorem}.]
Suppose that $\Sigma_{\xi}$ is Morse-Smale. To show that $\Sigma$ is convex (up to a contact isotopy of $M$ which fixes $\Sigma$), it suffices by Lemma \ref{conformallemma} to construct a vertically invariant contact form $\alpha_1$ on a neighborhood of $\Sigma$ such that $\beta_1 = g\beta$ for some $g > 0$. Here $\beta_1 = \alpha_1\mid_{\Sigma}$ and $\beta = \alpha\mid_{\Sigma}$. Throughout the proof we will loosely use $g$ to denote a sufficient positive function.

Classify each singular point $p$ of $\Sigma_{\xi}$ as either positive or negative in the natural way, i.e., based on the orientations of $\xi_p$ and $T_p\Sigma$. Classify each periodic orbit as either positive or negative according to Proposition \ref{orbit_prop}.

We claim that there is no flow line from a negative critical point or orbit to a positive critical point or orbit. Indeed, as mentioned in the introduction, the Morse index of a positive critical point $p$ satisfies $\ind(p) \leq n$. By Proposition \ref{orbit_prop}, the Morse index of any positive orbit also satisfies $\ind(\gamma)\leq n$. The transversality assumption in Definition \ref{MSdef} implies that the stable manifold of any positive critical point or orbit and the unstable manifold of any negative critical point or orbit either do not intersect, or the dimension of the intersection is $0$. In either case, there can be no flow line (necessarily one-dimensional) from a negative point or orbit to a positive point or orbit. 

Next, we will construct open sets $U^+$ and $U^-$ in $\Sigma$ containing all positive and negative points and orbits, respectively, and then use the resulting decomposition of $\Sigma$ to define $\alpha_1$. In particular, $U^+$ and $U^-$ will be ``prototypes'' for $R^+(\Sigma)$ and $R^-(\Sigma)$.

\vspace{2mm}
\indent \underline{Step 1: Constructing $U^+$}.
\vspace{1mm}

For any set $S \subset \Sigma$, let $\text{Op}(S)$ denote a sufficiently small open neighborhood of $S$ in $\Sigma$. 

Let $\{x_1, x_2, \cdots, x_M\}$ be the list of positive orbits and points (i.e., $x_i$ can be a critical point or an orbit). Let $U_k$ be the union of $\text{Op}(\{x_1, x_2, \cdots, x_k\})$ with sufficiently small tubular neighborhoods of the stable manifolds of $x_1, \dots, x_{k-1}$. Because there is no trajectory from a negative point or orbit, we may assume that the list $\{x_1, \dots, x_M\}$ is ordered so that a tubular neighborhood of the stable manifold of $x_{k+1}$ intersects $\partial U_k$ in a contact submanifold. Here the contact assumption comes from choosing $U_k$ so that $\partial U_k$ is transverse to the characteristic foliation. Finally, let $U^+ = U_M$.

\vspace{2mm}
\indent \underline{Step 2: Defining $\beta_1$ on $U^+$}.
\vspace{1mm}

We will define $\beta_1$ on $U^+$ by inducting on $k$. Note that $U_1 = \text{Op}(x_1)$ and by assumption, $\beta_1 := g\beta$ is positive Liouville on $U_1$ for some $g>0$. Now suppose that $\beta_1$ has been constructed on $U_k$. By assumption, $g\beta$ is Liouville on $\text{Op}(x_{k+1})$. Using the flow of the characteristic foliation and the above remark about the stable manifold of $x_{k+1}$, we may identify 
$$
U_{k+1} \setminus \left(U_k\cup \text{Op}(x_{k+1})\right)
$$
with $[0,1]\times L$ where $L$ is a contact submanifold. Here, $\{0\}\times L \subseteq \partial U_k$ and $\{1\}\times L \subseteq \partial \text{Op}(x_{k+1}).$ Because $L$ is a contact submanifold of $M$, $\lambda = \beta\mid_L$ is a contact form on $L$. Since the flow of the characteristic foliation is a contactomorphism of $L$, we have $\beta = h\, \lambda$ on $[0,1]_s\times L$ for some smooth $h  > 0$. Note that 
$$
d\beta = \frac{\partial h}{\partial s}\, ds\, \lambda + d_Lh\, \lambda + h\, d\lambda 
$$
and so 
\begin{equation}\label{sympeq}
(d\beta)^{n} = nh^{n-1}\, \frac{\partial h}{\partial s}\, ds\, \lambda\, (d\lambda)^{n-1}.    
\end{equation}
Thus, $\beta$ is Liouville if $\frac{\partial h}{\partial s} > 0$. After scaling $\beta$ by a sufficiently large constant on $\text{Op}(x_{k+1})$, the function $h$ can be multiplied by a positive function $\frac{h_1}{h}$ so that $\frac{\partial h_1}{\partial s} > 0$ on $[0,1]_s\times L$. With $\beta_1$ defined on $U_{k+1}$ in this way, $\beta_1$ is positive Liouville on $U_{k+1}$. 

Inductively, this defines $\beta_1$ on $U^+$ so that $\beta_1 = g\beta$ is a positive Liouville form.

\vspace{2mm}
\indent \underline{Step 3: Constructing $U^-$ and defining $\beta_1$ on $U^-$}.
\vspace{1mm}

Define an open neighborhood $U^-$ together with a negative Liouville form $\beta_1 = g\beta$ in the analogous way using negative singular points and negative periodic orbits together with the unstable manifolds of each.

\vspace{2mm}
\indent \underline{Step 4: Defining $\beta_1$ near the dividing set}.
\vspace{1mm}

By the above steps, $U^+$ and $U^-$ are disjoint open sets in $\Sigma$ containing all singular points and orbits. Furthermore, there are no flowlines running from $U^-$ to $U^+$. Thus, using the flow of $\Sigma_{\xi}$ we may identify $\Sigma \setminus (U^+ \cup U^-)$ with $[-1,1]_s\times \Gamma$ for some submanifold $\Gamma$, where $\{-1\}\times \Gamma = \partial U^+$, $\{1\}\times \Gamma = \partial U^-$, and $\Sigma_{\xi}$ is directed by $\partial_s$. Let $\lambda$ be the induced contact form on $\Gamma$. On $[-1,1]\times \Gamma$, $\beta = h\, \lambda$. We then have $\frac{\partial h}{\partial s} > 0$ near $\{-1\}\times \Gamma$ and $\frac{\partial h}{\partial s} < 0$ near $\{1\}\times \Gamma$ (see the remark after \eqref{sympeq}). Multiply $h$ by a function $\frac{h_1}{h}$ so that $\frac{\partial h}{\partial s} > 0$ for $-1\leq s< 0$, $\frac{\partial h_1}{\partial s} = 0$ for $s=0$, and $\frac{\partial h_1}{\partial s} < 0$ for $0<s\leq 1$. Let $\beta_1 = h\, \lambda$.

\vspace{2mm}
\indent \underline{Step 5: Defining the vertically invariant contact form $\alpha_1$ on $\R\times \Sigma$}.
\vspace{1mm}

Decompose $\R_t\times \Sigma$ as 
$$
\left(\R \times U^+\right) \cup \left(\R \times U^-\right) \cup \left(\R \times [-1,1]\times\Gamma\right). 
$$
Let $\alpha_1 = dt + \beta_1$ on $\R \times U^+$ and let $\alpha_1 = -dt + \beta_1$ on $\R \times U^-$. Since $\beta_1$ is positive (negative) Liouville on $U^+$ ($U^-$), $\alpha_1$ defines a contact form on these regions. Furthermore, by construction, $\alpha_1\mid_{U^\pm} = g\beta$ for some $g > 0$. 

To define $\alpha_1$ on $\R \times [-1,1]\times\Gamma$, let $u:[-1,1]\to \R$ be a smooth function such that $u'(s) < 0$ for $-1<s<1$, $u'(s) = 0$ for $|s|=1$, $u(-1) = 1$, $u(0) = 0$, and $u(1) = -1$. Let $\alpha_1 = u\, dt + \beta_1$. Then $\alpha_1$ is a smoothly defined $1$-form on $\R\times \Sigma$, and 
$$
\alpha_1 \, (d\alpha_1)^n = n\left(u^n\,\frac{\partial h_1}{\partial s} - \frac{\partial u}{\partial s}\, h_1^n \right)\, dt\, ds\, \lambda \, (d\lambda)^{n-1}.
$$
One may verify that with an appropriate choice of $h_1$ as defined in Step 4,
\begin{equation}\label{poseq}
u^n\,\frac{\partial h_1}{\partial s} - \frac{\partial u}{\partial s}\, h_1^n > 0.  
\end{equation}
If $n$ is even, then for $0<s\leq 1$ we require 
$$
\left|\frac{\partial h_1}{\partial s}\right| < \left|\frac{\partial u}{\partial s}\right|\cdot \left|\frac{h_1}{u}\right|^n
$$
which can be arranged by making $h_1$ sufficiently flat. Otherwise, the definitions of $u$ and $h_1$ force \eqref{poseq} to hold. Thus, $\alpha_1$ is a vertically invariant contact form defined near $\Sigma$ such that $\alpha_1\mid_{\Sigma} = g\beta$ for some positive function $g$. By the remark at the beginning of the proof, $\Sigma$ is convex.

\end{proof}

\section{Applications}\label{Applications}

In this section we provide some further analysis on a non-convex hypersurface introduced by A. Mori. We begin with some generalities, and then in \ref{morisubsection} we review the definition of the hypersurface, the argument for its non-convexity, and then prove that there is $C^{\infty}$-small perturbation of the hypersurface to a convex hypersurface. 

First, a lemma which computes the perturbation of the characteristic foliation in a particular model. 

\begin{lemma}\label{compute_perturb}
Consider the contact manifold $\R_t \times S^1_{\theta}\times L^{2n-1}$ with contact form $\alpha = t\, d\theta + \lambda$, where $\lambda$ is a contact form on $L$. Let $H:M\to \R$ be a smooth function, and let $\tilde{\Sigma} = \{t = H\}$. Let $X_H$ be the contact vector field corresponding to the contact Hamiltonian $H$ as in \eqref{cvf}. Then the characteristic foliation of $\tilde{\Sigma}$ is directed by $\partial_{\theta} - X_H$.
\end{lemma}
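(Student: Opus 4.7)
The plan is to observe that $\partial_\theta - X_H$ is itself a contact vector field with a conveniently chosen Hamiltonian, and then verify directly from the definition of the characteristic foliation that any such vector field, when restricted to the zero-set of its Hamiltonian, directs $\tilde{\Sigma}_\xi$.

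First, I would compute that $\partial_\theta$ is itself the contact vector field associated to the Hamiltonian $t$. Indeed, $\alpha(\partial_\theta) = t$, and since $d\alpha = dt\wedge d\theta + d\lambda$ we get $i_{\partial_\theta} d\alpha = -dt$. On the other hand, the Reeb vector field of $\alpha = t\,d\theta + \lambda$ is easily checked to be the Reeb vector field $R_\lambda$ of $\lambda$, so $dt(R_\alpha) = 0$ and the second equation in \eqref{cvf} with $H = t$ reduces to $i_{\partial_\theta} d\alpha = -dt$, matching our computation. Hence $\partial_\theta = X_t$. Since the correspondence $H \mapsto X_H$ is $\R$-linear (immediate from \eqref{cvf}), we conclude
$$
\partial_\theta - X_H = X_{t - H}.
$$

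Next, I would verify two things about the vector field $X_F$ associated to any Hamiltonian $F$, applied here to $F = t - H$. On the locus $\{F = 0\}$: (a) $X_F$ is tangent to $\tilde{\Sigma}$, and (b) $X_F$ lies in $\xi$. For (b), $\alpha(X_F) = F = 0$ on $\tilde{\Sigma}$. For (a), using \eqref{cvf},
$$
X_F(F) = dF(X_F) = -(i_{X_F}d\alpha)(X_F) + dF(R_\alpha)\alpha(X_F) = dF(R_\alpha)\, F,
$$
which vanishes on $\tilde{\Sigma}$. So $\partial_\theta - X_H = X_{t-H}$ lies in $T\tilde{\Sigma} \cap \xi\mid_{\tilde{\Sigma}}$.

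Finally, to conclude it directs the characteristic foliation, I would show that $X_{t-H}$ lies in the symplectic complement $(T\tilde{\Sigma} \cap \xi)^{\perp}$ inside $\xi$. For any $v \in T\tilde{\Sigma} \cap \xi$, applying \eqref{cvf} gives
$$
d\alpha(X_{t-H}, v) = d(t-H)(R_\alpha)\, \alpha(v) - d(t-H)(v) = 0,
$$
since $\alpha(v) = 0$ (as $v \in \xi$) and $d(t-H)(v) = 0$ (as $v \in T\tilde{\Sigma}$). Since the characteristic foliation is one-dimensional at regular points and $\partial_\theta - X_H$ is generically nonzero, this proves the lemma. None of the steps present a real obstacle; the only mild subtlety is recognizing the algebraic trick of rewriting $\partial_\theta - X_H$ as a single contact vector field $X_{t-H}$, after which the verification is purely formal from the defining equations \eqref{cvf}.
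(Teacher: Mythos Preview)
Your proof is correct and takes a genuinely different route from the paper. The paper proceeds by direct computation via Lemma~\ref{cflemma}: it chooses the volume form $\Omega = d\theta\,\lambda\,(d\lambda)^{n-1}$ on $\tilde{\Sigma}$, computes $\beta\,(d\beta)^{n-1}$ for $\beta = H\,d\theta + \lambda$, and matches it term-by-term with $i_{\partial_\theta - X_H}\Omega$ using the defining equations \eqref{cvf}. Your argument instead recognizes $\partial_\theta - X_H$ as the single contact vector field $X_{t-H}$ and then invokes the general (and essentially formal) fact that any contact vector field, restricted to the zero locus of its own Hamiltonian, is tangent, lies in $\xi$, and is $d\alpha$-orthogonal to $T\tilde{\Sigma}\cap\xi$---hence spans the characteristic line. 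This is cleaner and isolates a reusable principle: for any contact Hamiltonian $F$, the characteristic foliation of $\{F=0\}$ is directed by $X_F$. The paper's computation, while more hands-on, makes the specific form of $\beta(d\beta)^{n-1}$ explicit, which is sometimes useful downstream; your approach trades that explicitness for conceptual transparency and avoids any coordinate bookkeeping.
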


\begin{proof}
Let $\Omega = d\theta\, \lambda\, (d\lambda)^{n-1}$. Since $\lambda\, (d\lambda)^{n-1}$ is a volume form on $L$ and $\tilde{\Sigma}$ is a graph over $\{t=0\}$, $\Omega$ is a volume form on $\tilde{\Sigma}$. Using \eqref{cvf}, one may compute 
\begin{align*}
i_{\partial_{\theta} - X_H}\Omega &= \lambda\, (d\lambda)^{n-1} + d\theta \, i_{X_H}\left(\lambda\, (d\lambda)^{n-1}\right) \\
&= \lambda\, (d\lambda)^{n-1} + H\, d\theta\, (d\lambda)^{n-1} + (n-1)\, d\theta\, \lambda \, dH\, (d\lambda)^{n-2}.
\end{align*}
On the other hand, let $\beta = \alpha\mid_{\tilde{\Sigma}}$. Then $\beta = H\, d\theta + \lambda$ and $d\beta = dH\, d\theta + d\lambda$ so that 
$$
\beta\, (d\beta)^{n-1} = \lambda\, (d\lambda)^{n-1} + H\, d\theta\, (d\lambda)^{n-1} + (n-1)\, d\theta\, \lambda \, dH\, (d\lambda)^{n-2} = i_{\partial_{\theta}-X_H}\Omega.
$$
By Lemma \ref{cflemma}, $\partial_{\theta} - X_H$ directs the characteristic foliation. 
\end{proof}

This lemma becomes useful in the context of Theorem \ref{maintheorem} when $X_H$ is a pseudo-gradient for a Morse function on $L$. In this case, $\tilde{\Sigma} \cong S^1\times L$ has Morse-Smale characteristic foliation. Indeed, there are finitely many hyperbolic periodic orbits directed by $\partial_{\theta}$ corresponding to the zeroes of $X_H$. 

The existence of a Morse function admitting a gradient-like contact vector field is the defining feature of a \textit{convex contact structure}, first introduced by Eliashberg and Gromov in \cite{EG} and studied further by Giroux in \cite{Giroux}. 

\begin{theorem}[Giroux, see \cite{CM, Sackel}]\label{girouxthm}
Every contact manifold admits a contact vector field which is gradient-like for some Morse function.
\end{theorem}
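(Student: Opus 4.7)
The plan is to use the Giroux correspondence and the Weinstein structure on the pages of a supporting open book. Since the theorem is purely local-to-global in nature, the bulk of the work is in producing nice coordinates via open books and then doing an explicit construction on each piece.

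First, invoke the result of Cieliebak-Mohsen \cite{CM}: every coorientable contact manifold $(M^{2n+1}, \xi)$ is supported by a \emph{Weinstein} open book decomposition $(B, \pi)$, where $B$ is the binding, $\pi: M \setminus B \to S^1$ is a fibration, and each page $(P, \lambda)$ is a Liouville domain carrying a Morse function $\phi_P: P \to \R$ and a compatible metric $g$ such that the Liouville vector field $X$ (with $i_X\, d\lambda = \lambda$) equals $\nabla_g \phi_P$. The contact form $\alpha$ may be chosen so that $\alpha|_P = \lambda$ and, in the standard tubular neighborhood of $B$, $\alpha = \alpha_B + r^2\, d\theta$ for some contact form $\alpha_B$ on $B$.

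Second, construct the contact vector field on the mapping-torus portion $M \setminus \text{Op}(B) \cong P \times_\psi S^1$. After perturbing the monodromy $\psi$ within its isotopy class (which does not change $\xi$) to fix $\phi_P$ near its critical set, the function $\phi_P$ is well-defined on this region. Define the contact Hamiltonian $\Phi(x,\theta) = \phi_P(x) + \ve f(\theta)$ for a Morse $f: S^1 \to \R$ and small $\ve > 0$, and let $V = X_\Phi$ be its contact vector field via \eqref{cvf}. The Liouville property $X = \nabla_g \phi_P$ forces $V$ to be gradient-like for $\Phi$ along the page directions, while the $\ve f(\theta)$ term provides nondegeneracy in the $\theta$ direction. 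The critical points are products $\mathrm{Crit}(\phi_P) \times \mathrm{Crit}(f)$, all hyperbolic.

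Third, handle the binding by induction on dimension. Since $B$ is a closed contact $(2n-1)$-manifold, by the inductive hypothesis it admits a Morse function $\phi_B$ with gradient-like contact vector field $V_B$. In the standard neighborhood $B \times \mathbb{D}^2$ with $\alpha = \alpha_B + r^2\, d\theta$, extend by setting $\Phi_B = \phi_B + \tfrac{1}{2}r^2$, whose associated contact vector field incorporates the radial Liouville term $\tfrac{1}{2}r\, \partial_r$ and remains gradient-like for $\Phi_B$. The base case $\dim M = 1$ is immediate: on $(S^1, d\theta)$ the function $\phi = \cos\theta$ has contact vector field $V = -\sin\theta\, \partial_\theta$, which is gradient-like.

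Finally, interpolate on the overlap region $B \times (\mathbb{D}^2 \setminus \{0\})$ using a partition of unity and the conformal flexibility $\alpha \mapsto g\alpha$. The main obstacle is this gluing step: one must smoothly match the two constructions so that the total vector field remains contact (satisfying $\Lie_V \alpha = g\alpha$) and the resulting Morse function has only hyperbolic critical points, with no spurious degenerate ones introduced at the interface. This is handled by rescaling and careful choice of cutoffs, following the framework of \cite{CM, Sackel}.
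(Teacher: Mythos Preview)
The paper does not prove Theorem~\ref{girouxthm}; it is stated without proof and attributed to Giroux, with \cite{CM} and \cite{Sackel} given as references for detailed accounts. There is therefore no argument in the paper to compare your proposal against.

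That said, your outline does follow the broad strategy of those references: use a supporting open book with Weinstein pages, build the gradient-like contact vector field on the pages, handle the binding inductively, and glue. A few remarks on the proposal itself. First, \cite{CM} in this paper is Courte--Massot, not ``Cieliebak--Mohsen.'' Second, Step~2 contains a genuine gap: the monodromy $\psi$ is an exact symplectomorphism of the page but has no reason to preserve the Morse function $\phi_P$, so the expression $\Phi(x,\theta) = \phi_P(x) + \ve f(\theta)$ does not descend to a well-defined function on the mapping torus $P\times_\psi S^1$; arranging $\psi$ to fix $\phi_P$ only near the critical set does not repair this away from that set. The constructions in \cite{CM, Sackel} address this by building a function on the total space more carefully, not by na\"ively extending $\phi_P$. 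Third, the assertion that the contact vector field $X_\Phi$ associated to $\Phi$ via \eqref{cvf} is automatically gradient-like for $\Phi$ is not justified; the contact Hamiltonian correspondence does not in general produce a gradient-like field for its own Hamiltonian, and some estimate is required. Finally, you yourself flag Step~4 (the gluing near the binding) as the main obstacle and then defer it entirely to the literature, so the proposal remains a plausible sketch rather than a proof.
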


With this fact and Theorem \ref{maintheorem}, we have the following corollary. 

\begin{corollary}\label{cor}
Let $\Sigma^{2n}\subset (M^{2n+1}, \xi = \ker \alpha)$ be a hypersurface in a contact manifold diffeomorphic to $S^1\times L^{2n-1}$ for some closed manifold $L$. Suppose that the characteristic foliation $\Sigma_{\xi}$ consists of completely degenerate periodic orbits, so that the foliation is directed by $\partial_{\theta}$ for some choice of coordinate $\theta$ on $S^1$. Then there is an arbitrarily $C^{\infty}$-small perturbation of $\Sigma$ to a convex hypersurface. 
\end{corollary}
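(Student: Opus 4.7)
The plan is to apply Theorem \ref{maintheorem} to a $C^\infty$-small graph perturbation of $\Sigma$. The first (and most substantive) step is to identify a neighborhood of $\Sigma$ in $M$, up to a contact isotopy of $M$ fixing $\Sigma$, with the model $\R_t \times S^1_\theta \times L$ equipped with the contact form $\alpha_1 = t\, d\theta + \lambda$, where $\lambda$ is a $\theta$-independent contact form on $L$ and $\Sigma = \{t = 0\}$.

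To establish this normal form, note that $\partial_\theta \in \ker(d\beta \mid_{\ker \beta})$ forces $i_{\partial_\theta}d\beta$ to vanish on $\ker \beta$, so $i_{\partial_\theta}d\beta = c\beta$ for some smooth $c \colon \Sigma \to \R$. Combined with $\beta(\partial_\theta) = 0$, Cartan's formula gives $\Lie_{\partial_\theta}\beta = c\beta$. Because the $\partial_\theta$-flow $\phi_s$ has period $2\pi$, the identity $\phi_{2\pi}^*\beta = \beta$ forces $\int_0^{2\pi} c(l, \tau)\, d\tau = 0$ for every $l \in L$, so the ODE $\partial_\theta G = -Gc$ admits a positive, $2\pi$-periodic solution $G$ on $\Sigma$. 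The rescaled form $G\beta$ is then $\theta$-invariant and descends to a 1-form $\lambda$ on $L$. Since $L \times \{0\}$ is a transversal to $\Sigma_\xi$, it is a contact submanifold of $M$, so $\lambda$ (a positive rescaling of the induced contact form) is contact on $L$. A direct computation shows that $\alpha_1 = t\, d\theta + \lambda$ is a contact form on $\R \times S^1 \times L$ satisfying $\alpha_1\mid_\Sigma = G\beta$, so Lemma \ref{conformallemma} produces the desired contact isotopy.

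With the normal form in place, I apply Theorem \ref{girouxthm} to $(L, \lambda)$ to obtain a Morse function $f$ on $L$ and a contact vector field $Y$ that is gradient-like for $f$; writing $g = \lambda(Y)$, we have $Y = X^\lambda_g$, the contact Hamiltonian vector field of $g$ on $L$. After a further $C^\infty$-small perturbation of $g$, I may assume $Y$ is Morse-Smale on $L$. For small $\epsilon > 0$, define $H(\theta, l) = \epsilon g(l)$ and $\tilde{\Sigma}_\epsilon = \{t = H\}$, which is a $C^\infty$-small graph perturbation of $\Sigma$. By Lemma \ref{compute_perturb}, the characteristic foliation on $\tilde{\Sigma}_\epsilon$ is directed by $\partial_\theta - X_H$; a brief computation of $X_H$ in the model (using the identity $dg(X^\lambda_g) = g\, dg(R_\lambda)$, which forces the $\partial_t$-component of $\partial_\theta - X_H$ to vanish along $\tilde{\Sigma}_\epsilon$) shows that in intrinsic coordinates $(\theta, l) \in S^1 \times L$ on $\tilde{\Sigma}_\epsilon$, this vector field equals $\partial_\theta - \epsilon Y$.

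Finally, I verify that $\partial_\theta - \epsilon Y$ is Morse-Smale on $S^1 \times L$: it has no zeros, and its periodic orbits are precisely the circles $S^1 \times \{l_0\}$ over the zeros $l_0$ of $Y$, since the gradient-like property precludes non-constant closed $Y$-orbits on $L$. Each such orbit is hyperbolic because the linearization of the Poincar\'e return map $\phi_{2\pi}^{-\epsilon Y}$ is $e^{-2\pi\epsilon\, DY(l_0)}$, whose eigenvalues avoid the unit circle by Morse nondegeneracy. Transversality of the stable and unstable manifolds on $S^1 \times L$ reduces directly to that for $Y$ on $L$. Theorem \ref{maintheorem} then yields convexity of $\tilde{\Sigma}_\epsilon$. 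The main obstacle is Step 1: producing the $\theta$-invariant rescaling $G$ via the cohomological vanishing of the integrated infinitesimal conformal factor, and then reducing $\alpha$ to the standard model $t\, d\theta + \lambda$ via Lemma \ref{conformallemma}.
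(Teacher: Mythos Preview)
Your proposal is correct and follows essentially the same route as the paper: reduce a neighborhood of $\Sigma$ to the model $(\R_t\times S^1_\theta\times L,\; t\,d\theta+\lambda)$ via Lemma~\ref{conformallemma}, invoke Theorem~\ref{girouxthm} to obtain a gradient-like contact vector field on $L$, perturb $\Sigma$ to the graph $\{t=\epsilon g\}$, read off the characteristic foliation from Lemma~\ref{compute_perturb}, and apply Theorem~\ref{maintheorem}. You supply strictly more detail than the paper does---in particular your ODE argument producing the $\theta$-periodic conformal factor $G$ (so that $G\beta$ descends to a $\theta$-independent $\lambda$) justifies the normal-form step that the paper simply asserts, and your explicit check that $\partial_\theta-X_H$ reads as $\partial_\theta-\epsilon Y$ in intrinsic coordinates, together with the Morse--Smale verification, fills in points the paper leaves implicit.
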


\begin{proof}
Because $\{\theta\}\times L \subset \Sigma$ is transverse to the characteristic foliation, $\alpha\mid_{\{\theta\}\times L}$ is contact. By Lemma \ref{conformallemma}, we may take a sufficiently small neighborhood of $\Sigma$ to be contactomorphic to $\R_t \times S^1_{\theta}\times L^{2n-1}$ with contact form $\alpha = t\, d\theta + \lambda$, where $\lambda$ is a contact form on $L$ and $\Sigma = \{t = 0\}$. By Theorem \ref{girouxthm}, we may choose a contact vector field $X_H$ on $L$ which is gradient-like for some Morse function on $K$. By scaling the corresponding contact Hamiltonian $H$, we may assume that the $C^{\infty}$ norm of $H$ is as small as we like. Then $\tilde{\Sigma} = \{t=H\}$ will be $C^{\infty}$-close to $\Sigma$, and by Lemma \ref{compute_perturb} the characteristic foliation of $\tilde{\Sigma}$ is directed by $\partial_{\theta} - X_H$. Since this vector field is Morse-Smale, by Theorem \ref{maintheorem}, $\Sigma$ is convex. 
\end{proof}

In particular, the proof of this corollary shows that any completely degenerate periodic orbit in a characteristic foliation can be locally perturbed to be hyperbolic.

\subsection{Mori's hypersurface}\label{morisubsection}
In \cite{Mori}, Mori introduced a particular non-convex hypersurface. We review the definition here. Consider 
$$
\R^{2n+1} = \R_{z} \times \R^2_{r,\theta} \times \R^2_{\rho_1, \phi_1} \times \cdots \times \R^2_{\rho_{n-1}, \phi_{n-1}}
$$
where $(r,\theta)$ and $(\rho_i, \phi_i)$ are polar coordinates in their respective planes. Let 
\begin{equation}\label{contactform}
\alpha = (2r^2 - 1)\, dz + r^2(r^2-1)\, d\theta + \sum_{i=1}^{n-1} \rho_i^2\, d\phi_i.    
\end{equation}
One can check that $\alpha$ is a contact form. Next, for $0 < \ve << 1$, let 
$$
\Sigma_0 = \left\{r^2 + \ve^{-2}\left(z^2 + \sum_{i=1}^{n-1} \rho_i^2\right) = 1 + \ve\right\}.
$$
Note that $\Sigma_0$ is diffeomorphic to $S^{2n}$. 

\begin{lemma}\cite{Mori}\label{foliationlemma}
The characteristic foliation on $\Sigma_0 \subset (\R^{2n-1}, \ker\alpha)$ is directed by the vector field 
\begin{multline}
X = \left[(r^2 - 1)^2 + (2r^2 - 1)(\ve^{-2}z^2 - \ve)\right]\, \partial_z \\
+ \ve^{-2}r(r^2-1)z\, \partial_r + (1 + 2\ve - 2\ve^{-2}z^2)\,\partial_{\theta} \\
+  \ve^{-2}(2r^2 - 1)z\, \sum_{i=1}^{n-1} \partial_{\rho_i} + \ve^{-2}(2r^4 - 2r^2 + 1)\, \sum_{i=1}^{n-1} \partial_{\phi_i}.
\end{multline}
\end{lemma}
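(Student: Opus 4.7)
The plan is a direct application of Lemma \ref{cflemma}. I would produce a volume form $\Omega$ on $\Sigma_0$ and verify that the given vector field $X$ satisfies $i_X \Omega = \beta \wedge (d\beta)^{n-1}$ with $\beta = \alpha|_{\Sigma_0}$. Since $\alpha$ has no $dr$-component, the pullback $\beta$ reads formally as $\alpha$, with $r$ regarded as an implicit function of the other coordinates via the defining equation of $\Sigma_0$. Differentiating $F = r^2 + \ve^{-2}(z^2 + \sum_i\rho_i^2) - 1 - \ve$ and setting $dF|_{\Sigma_0} = 0$ yields the substitution rule
\[
dr \;=\; -\ve^{-2}r^{-1}\Bigl(z\, dz + \sum_{j=1}^{n-1}\rho_j\, d\rho_j\Bigr)
\]
on $\Sigma_0$ (at points where $r \neq 0$). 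As the volume form I would take $\Omega = dz\wedge d\theta \wedge \bigwedge_{i=1}^{n-1}(d\rho_i \wedge d\phi_i)$, which is nowhere-vanishing on $\Sigma_0$ away from $\{r=0\}$.

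Computing in the ambient $\R^{2n+1}$, one finds
\[
d\alpha \;=\; 4r\, dr\wedge dz + (4r^3 - 2r)\, dr\wedge d\theta + 2\sum_{i=1}^{n-1}\rho_i\, d\rho_i\wedge d\phi_i,
\]
so after substituting for $dr$, the form $d\beta$ on $\Sigma_0$ decomposes as a sum of cross terms in $dz\wedge d\rho_j$, $dz\wedge d\theta$, $d\rho_j\wedge d\theta$, together with the intrinsic symplectic pairs $d\rho_i\wedge d\phi_i$. The $S^1$-symmetries in $\theta$ and each $\phi_i$, and the permutation symmetry in the indices $i$, force $X$ to have coefficients depending only on $(r, z, \rho_1,\ldots,\rho_{n-1})$ and to be invariant under index permutation, which exactly matches the shape of $X$ in the statement.

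The main technical obstacle is the combinatorial bookkeeping in the $(n-1)$-fold wedge product $(d\beta)^{n-1}$. After the substitution, $d\beta$ has terms of three flavors --- terms containing $dz$, terms containing $d\theta$, and the symplectic pairs $d\rho_i\wedge d\phi_i$ --- and the multinomial expansion of $(d\beta)^{n-1}$ produces a long sum whose coefficients must be carefully organized when multiplied by $\beta$. Comparing the coefficient of each basic $(2n-1)$-form of the shape $dz\wedge d\theta\wedge\bigwedge_{i\in S}(d\rho_i\wedge d\phi_i)$ (and $(2n-1)$-forms in which one of these symplectic pair factors is replaced by a mixed cross term) with the corresponding expansion of $i_X \Omega$ would then read off, one by one, the five components of $X$. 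As a sanity check I would verify that $dF(X) = 0$ and $\alpha(X) = 0$ on $\Sigma_0$, both of which must hold since the characteristic line lies in $T\Sigma_0 \cap \xi$.
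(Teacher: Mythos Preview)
The paper does not prove this lemma; it is stated with attribution to \cite{Mori} and simply quoted as input. There is therefore no ``paper's own proof'' to compare against. Your plan --- applying Lemma~\ref{cflemma} with the chart $(z,\theta,\rho_1,\phi_1,\ldots,\rho_{n-1},\phi_{n-1})$ on $\Sigma_0$ away from $\{r=0\}$, eliminating $dr$ via $dF|_{\Sigma_0}=0$, and matching coefficients in $\beta\wedge(d\beta)^{n-1}=i_X\Omega$ --- is exactly the natural verification, and the symmetry observations you make (independence from $\theta,\phi_i$ and permutation invariance in the index $i$) are the right way to organize the multinomial bookkeeping in $(d\beta)^{n-1}$.

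One remark on your sanity check $dF(X)=0$: it is worth actually carrying out, because with the $\partial_{\rho_i}$-coefficient exactly as printed the check produces a term proportional to $\sum_i\rho_i$ in $dF(X)$, which cannot cancel against the remaining terms (all of which involve only $r^2$, $z^2$, and $\sum_i\rho_i^2$ via the defining equation). Replacing that coefficient by $\ve^{-2}(2r^2-1)z\,\rho_i$ makes the tangency check go through on $\Sigma_0$. This appears to be a typographical slip in the displayed formula rather than a defect in your method.
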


We may visualize the characteristic foliation as follows \cite{Mori}. Observe that the vector field $X$ from Lemma \ref{foliationlemma} does not depend on $\theta$ or $\phi_i$. Thus, if we project $P:\Sigma_0\to \tilde{\Sigma}_0$ to the quarter ellipsoid $\tilde{\Sigma}_0 = \{z^2 + r^2 + \rho^2 = 1+\ve\}\subseteq \set{(z,r,\rho)}{r,\rho\geq 0}$ where $\rho^2 = \rho_1^1 + \cdots + \rho_{n-1}^2$, the vector field $X$ has a well-defined pushforward $\tilde{X}$ given by 
\begin{equation}
\tilde{X} = \left[(r^2 - 1)^2 + (2r^2 - 1)(\ve^{-2}z^2 - \ve)\right]\, \partial_z + \ve^{-2}r(r^2-1)z\, \partial_r + \sqrt{n-1}\ve^{-2}(2r^2 - 1)z\, \partial_{\rho}.    
\end{equation}
This pushforward is visualized in Figure \ref{fig:projections}. Observe that $\tilde{X}$ is Morse-Smale.

\begin{figure}[htb]
    \centering
    \subfloat[]{{\includegraphics[width=3.7cm]{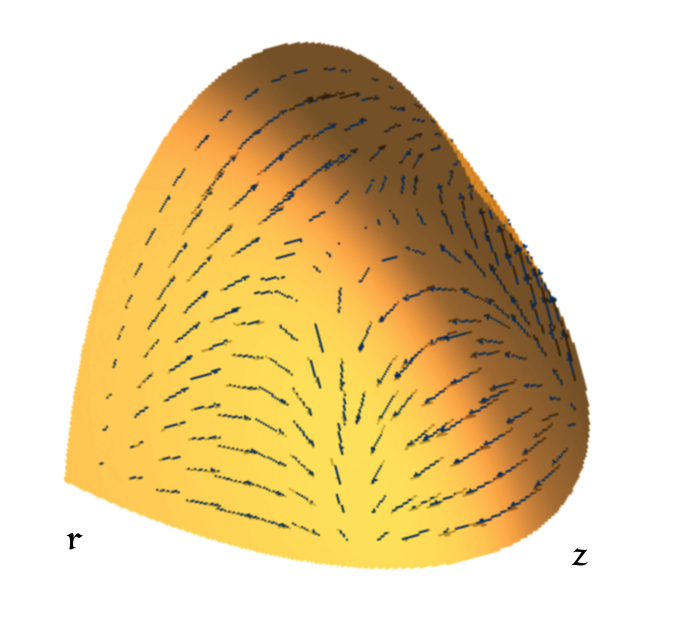} }}%
    \qquad
    \subfloat[]{{\includegraphics[width=5cm]{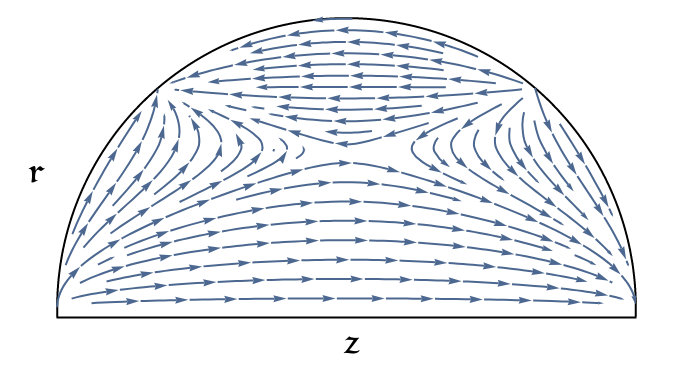} }}%
    \caption{(A) is the pushforward of $X$ to $\tilde{X}$ on the quarter ellipsoid $\tilde{\Sigma}$. (B) is the further projection of $\tilde{X}$ to the $(z,r)$-plane.}%
    \label{fig:projections}%
\end{figure}

In \cite{Mori} it was proven that $\Sigma_0$ is not convex. For completeness, we provide the argument here with some more details.  

\begin{lemma}\cite{Mori}\label{notconvex}
The hypersurface $\Sigma_0$ is not convex. 
\end{lemma}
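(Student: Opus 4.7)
My plan is to assume $\Sigma_0$ is convex and derive a contradiction by exhibiting a compact, invariant submanifold of the characteristic foliation on which the contact form $\alpha$ restricts to a 1-form with nonzero periods, which is incompatible with the Liouville structure that convexity would induce on $R^\pm$.

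First I would locate the invariant submanifold using Lemma \ref{foliationlemma}. Setting all three components of the projected vector field $\tilde{X}$ to zero forces $z = 0$ together with $(r^2 - 1)^2 = \ve(2r^2 - 1)$; the unique small-$\ve$ solution compatible with $\Sigma_0$ is $r_0^2 \in (0,1)$, and $\rho_0^2 := \ve^2(1 + \ve - r_0^2) > 0$. The submanifold
$$
T \;=\; \{z = 0,\; r = r_0,\; \textstyle\sum \rho_i^2 = \rho_0^2\} \;\subset\; \Sigma_0
$$
is compact of codimension $2$, and $X|_T$ is a nowhere-vanishing linear combination of $\partial_\theta$ and the $\partial_{\phi_i}$, so $T$ is a compact invariant set of the characteristic foliation with quasi-periodic internal dynamics. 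A direct computation gives
$$
\alpha|_T \;=\; r_0^2(r_0^2 - 1)\, d\theta + \sum_i \rho_{0,i}^2\, d\phi_i,
$$
and in particular $\int_{\theta\text{-loop}} \alpha = 2\pi r_0^2(r_0^2 - 1) \neq 0$ since $r_0 \in (0, 1)$.

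Next, suppose $\Sigma_0$ is convex. By Lemma \ref{conformallemma} and the definition of convexity, after conformal rescaling we may arrange $\alpha = u\, dt + g\beta$ in a neighborhood of $\Sigma_0$ with $g > 0$, so that $g\beta$ is a positive Liouville form on $R^+$ with Liouville vector field $L$ directing the characteristic foliation (and analogously on $R^-$). The identity $(\phi_t^L)^*(g\beta) = e^t(g\beta)$ forces $g\beta$ to vanish on any compact $L$-invariant subset of $R^+$: for $v \in T_pT$ with $(g\beta)(v) \neq 0$, the relation $(g\beta)(d\phi_t^L v) = e^t (g\beta)(v)$ combined with boundedness of $g\beta$ on the compact $\Sigma_0$ and with the bounded flow derivative of the quasi-periodic reparametrization of $X$ on $T$ yields a contradiction. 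Applied to $T \subset R^+$ (or $T \subset R^-$), this forces $\alpha|_T = 0$, contradicting the first step.

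The case I expect to be the main obstacle is when $T$ straddles the dividing set $\Gamma$. Here $L$ blows up along $T \cap \Gamma$ and $L$-trajectories on $T \cap R^\pm$ take infinite $L$-time to approach $\Gamma$, so the naive boundedness argument must be refined. The plan is to exploit the density of $X$-orbits on $T$: a generic orbit visits $R^+$ and $R^-$ recurrently, and by integrating the Liouville scaling along each $R^+$-arc and invoking recurrence, one shows that $g\beta$ must vanish on the recurrent tangent directions along $T \cap \Gamma$; continuity then propagates this to all of $T$, again contradicting the nonzero $\theta$-period computed in the first step. Making this recurrence argument rigorous while carefully tracking the singular behavior of $L$ near $\Gamma$ is the technical heart of the proof.
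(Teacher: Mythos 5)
Your identification of the invariant set $T=\{z=0,\ r=r_0,\ \sum_i\rho_i^2=\rho_0^2\}$ is correct and coincides with the set $P^{-1}(p)$ on which the paper's own argument also turns, and your computation of the nonzero $\theta$-period of $\alpha|_T$ is right. The gap is in the engine of your contradiction: the claim that a compact invariant set of a Liouville flow cannot carry a tangent direction on which the Liouville form is nonzero is not a valid general principle, and your justification of it fails exactly at the point where all the work is. From $\phi_t^*(g\beta)=e^t(g\beta)$ you need $\|d\phi_t(v)\|=e^{o(t)}$ for $v\in T_qT$; you assert ``boundedness of the flow derivative of the quasi-periodic reparametrization,'' but the Liouville field equals $hX$ for a positive function $h$ coming from the hypothetical convex structure, which need not respect the $\theta$- and $\phi_i$-symmetries, and time-changes of quasi-periodic flows generally have unbounded derivative cocycles. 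Worse, the general principle is false: there are Liouville structures (Mitsumatsu-type examples modeled on Anosov flows) whose compact skeleton carries a nowhere-vanishing restriction of the Liouville form with nonzero periods --- there the derivative cocycle grows exponentially and no contradiction arises. So your argument must genuinely exploit the zero-entropy, quasi-periodic nature of the dynamics on $T$ to obtain subexponential derivative (or length) growth, and that input is entirely missing from the proposal.

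The second gap is the straddling case, which you flag as the ``technical heart'' but do not resolve; the recurrence sketch is too vague to assess. This case can in fact be eliminated cheaply, as the paper does: after adjusting $\ve$ so that the leaves of the foliation on $T$ are closed orbits, the contact condition forces $X(u)$ to have a definite sign along $\Gamma=\{u=0\}$, so no closed orbit can meet $\Gamma$, and the connected set $T$ lies entirely in $R^+$ or $R^-$. For comparison, the paper avoids your Liouville-flow argument altogether: it uses the symmetry to normalize $\Gamma$ as $P^{-1}(C)$, shows $p\notin C$ by the closed-orbit observation, pins down the isotopy class of $C$ from the signs of the singular points and periodic orbits, and then obtains the contradiction from McDuff's theorem on symplectic manifolds whose contact-type boundary is a standard $S^{2n-1}$ together with another component. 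Your route would be more direct if the dynamical lemma were supplied, but as written the central step is unproven and, stated at the level of generality you invoke, false.
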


\begin{proof}
Let $p = (0, r^*, 1+ \ve - (r^*)^2)$ denote the point on $\tilde{\Sigma}_0$ which is the hyperbolic zero of $\tilde{X}$. Observe that 
$$
P^{-1}(p) = \set{(\theta,\rho_1,\phi_1,\dots,\rho_{n-1},\phi_{n-1})}{\rho^2 = 1 + \ve - (r^*)^2}
$$
is diffeomorphic to $S^1_{\theta} \times S^{2n-3}$. The characteristic foliation along $P^{-1}(p)$ is directed by the vector field 
$$
(1 + 2\ve)\, \partial_{\theta} + \ve^{-2}(2(r^*)^4 - 2(r^*)^2 + 1)\, \sum_{i=1}^{n-1} \partial_{\phi_i}
$$
By adjusting $\ve$ if necessary, we may assume that this vector field foliates $S^1\times S^{2n-3}$ with periodic orbits, hence the characteristic foliation along $P^{-1}(p)$ consists of parallel leaves.

Suppose for the sake of contradiction that $\Sigma_0$ is convex. Then there is a dividing set $\Gamma$. Because $(\Sigma_0)_{\xi}$ is independent of $\theta$ and $\phi_i$, we may isotope $\Gamma$ so that $\Gamma = P^{-1}(C)$ for some multicurve $C \subset \tilde{\Sigma}_0$; see Figure \ref{fig:dividingsets}.

\begin{figure}[htb]
    \centering
    \subfloat[]{{\includegraphics[width=5cm]{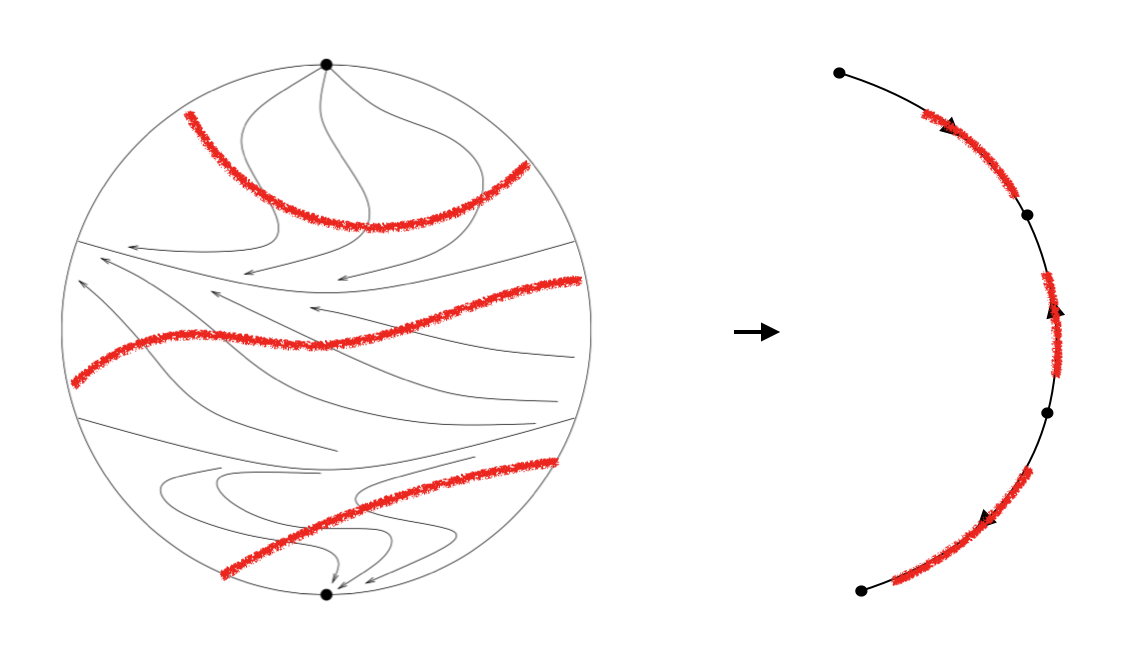} }}%
    \qquad
    \subfloat[]{{\includegraphics[width=5cm]{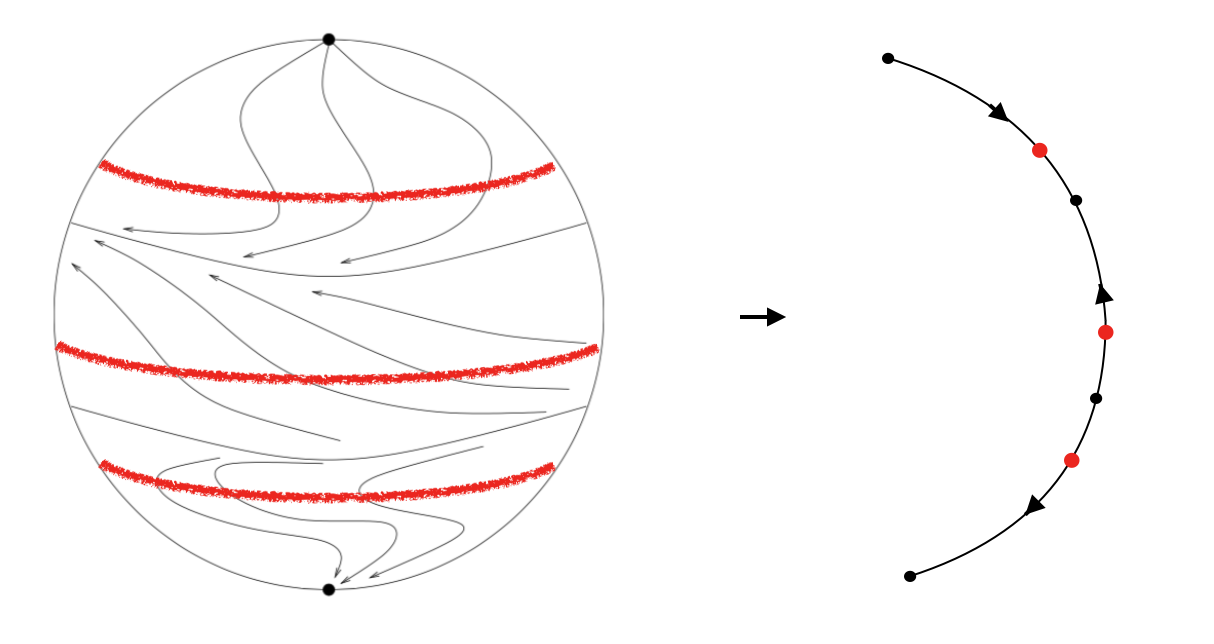} }}%
    \caption{(A) and (B) depict a sphere $\{z^2 + r^2 = c^2\}$ in $\R^3_{(r,\theta,z)}$ with a radially invariant characteristic foliation, together with the well-defined pushforward to the semicircle $\{z^2 + r^2 = c^2, r\geq 0\}$ in the $(r,z)$-plane. In (A), the dividing set in red projects to a codimension $0$ set on the semicircle. In (B), we choose a radially invariant dividing set which projects to a codimension $1$ submanifold of the semicircle. This is the low-dimensional analogue to the pushforward of $X$ to $\tilde{X}$.}%
    \label{fig:dividingsets}%
\end{figure}

We claim that $C$ does not contain $p$. Suppose it did: then $\Gamma$ contains the linearly foliated $P^{-1}(p)$. By \cite{Giroux}, there is a function $u:\Sigma \to \R$ for which $X(u) < 0$ on $P^{-1}(p)$, which contradicts the fact that $X$ has closed orbits on $P^{-1}(p)$. Thus, $C$ avoids $p$. Finally, note that the singular points of $X$ are
$$
(z = \pm \ve\sqrt{1+\ve}, r = 0, \theta, \rho_1 = 0, \phi_1, \dots, \rho_{n-1} = 0, \phi_{n-1}).
$$
For divergence reasons, these must lie in the negative and positive region, respectively. The remaining singular points of $\tilde{X}$ are 
$$
(z = \pm \ve\sqrt{\ve}, r = 1, \rho = 0)
$$
which lift under $P$ to periodic orbits that must lie in the positive and negative region, respectively. Consequently, $C$ must contain a component which is isotopic to one of the green curves in Figure \ref{fig:curves}.

\begin{figure}[htb]
    \centering
    \includegraphics[scale=0.45]{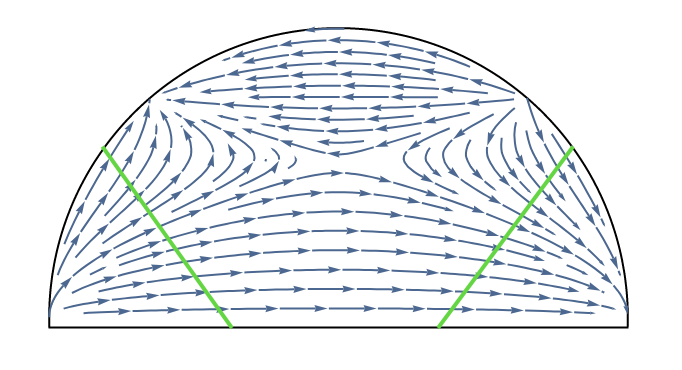}
    \caption{}
    \label{fig:curves}
\end{figure}

The lift of either of these curves under $P$ is diffeomorphic to $S^{2n-1}$. Moreover, there are necessarily other components of $\Gamma$. This contradicts a theorem of McDuff \cite{McDuff}, as the positive region of $\Sigma_0$ is then a symplectic manifold with convex boundary of the type $S^{2n-1} \sqcup S'$ for some other $2n-1$-manifold $S'$. Thus, no such dividing set can exist and so $\Sigma_0$ is not convex. 

\end{proof}

Using ideas inspired by the the proof of Corollary \ref{cor}, we prove Corollary \ref{maincorollary}.

\begin{proof}[Proof of Corollary \ref{maincorollary}.]
By Theorem \ref{maintheorem}, it suffices to perturb $\Sigma_0$ so that the resulting characteristic foliation is Morse-Smale. Lemma \ref{foliationlemma}, the subsequent discussion, and the proof of Lemma \ref{notconvex} show that the characteristic foliation is close to being Morse-Smale. The obstruction is $P^{-1}(p) \cong S^1\times S^{2n-3}$, which is foliated by parallel leaves. The pushforward $\tilde{X}$ (in the $\tilde{\Sigma}_0$ direction) is Morse-Smale, so it suffices to perturb the hypersurface near $P^{-1}(p)$ so that the resulting foliation, when restricted to $P^{-1}(p)$, is Morse-Smale.

Observe that for any fixed $\theta_0$, the contact form $\alpha$ in \eqref{contactform} restricts to the standard contact structure on $L = \{\theta_0\}\times S^{2n-3} \subseteq P^{-1}(p)$. Let $U$ be a small neighborhood of $p$ in $\tilde{\Sigma}_0$. Then $U\times L$ is transverse to the characteristic foliation and hence is also contact. Using the flow of the characteristic foliation starting at $U\times L$, we isolate a ``column'' $[0,1]_s \times U \times L$ where the characteristic foliation is directed by $\partial_s$. Note that we may take the foliation on top of the $L$ component to already be ``straight'', so this identification only straightens out the foliation above the $U$ component. By Lemma \ref{conformallemma}, we may assume that a neighborhood of the column is given by 
$$
\R_t \times [0,1]_s \times U \times L \quad \text{ with contact form } \quad t\, ds + \lambda
$$ 
where $\lambda$ is contact on $U\times L$, $\lambda\mid_L$ is the standard contact form on $S^{2n-3}$, and $\Sigma_0$ is identified with $\{t=0\}$. Finally, note that $[0,1]_s\times \{p\} \times L \subset P^{-1}(p)$. Our perturbation will be supported in this column.

Pick a $C^{\infty}$-small contact Hamiltonian $H:L\to \R$ such that the corresponding contact vector field $X_H$ is gradient-like for a Morse function. In particular, we may choose $X_H$ to be gradient-like for a height function on the sphere (\cite{EG}, \cite{Giroux}, \cite{Sackel}) so that $X_H$ has one source singularity and one sink singularity. Extend $H$ to $U\times L$ via a bump function which is constant near $p$. Finally, extend $H$ in the $s$ direction so that it is supported in the column $[0,1]_s \times U \times L$. Let $\Sigma_1 = \{t = H\}$. Because $\tilde{X}$ is hyperbolic at $p$ and thus structurally stable \cite{Palis}, the location of the zero may shift slightly from $p$ to some other point $p_1$, when perturbed as above, but the hyperbolic dynamics in the $\tilde{\Sigma}_0$ direction persist if the perturbation is small enough. As in Corollary \ref{cor}, the degenerate dynamics of the characteristic foliation on $P^{-1}(p_1)$ will be perturbed by the gradient-like vector field $X_H$ in the $L$ direction. As a result, the characteristic foliation of $\Sigma_1$ is Morse-Smale, as desired. 

\end{proof}

\bibliographystyle{amsplain}

\begin{thebibliography}{10}

\bibitem{CE} Cieliebak, K., Eliashberg, Y. \textit{From Stein to Weinstein and Back: Symplectic Geometry of Affine Manifolds}. American Mathematical Society, Colloquium Publications 59, 2012. 

\bibitem{CM} Courte, S., Massot, P. \textit{Contactomorphism groups and Legendrian flexibility}. arXiv preprint \textit{arXiv:1803.07997}, 2018. \url{https://arxiv.org/abs/1803.07997}.


\bibitem{EG} Eliashberg, Y., Gromov, M \textit{Convex Symplectic Manifolds}. Proc. Symp. Pure Math 52: 135 - 162, 1991.


\bibitem{Geiges} Geiges, H. \textit{An Introduction to Contact Topology}. Cambridge University Press, 2009. 

\bibitem{Giroux} Giroux, E. \textit{Convexity in Contact Topology.} Commentarii Mathematici Helvetici, 1991. 

\bibitem{Honda1} Honda, K. \textit{On the classification of tight contact structures I}. Geom. Topol. 4: 309 - 368, 2000.

\bibitem{HondaHuang2} Honda, K., Huang, Y. \textit{Bypass attachments in higher-dimensional contact topology}. arXiv preprint \textit{arXiv:1803.09142}, 2018. \url{https://arxiv.org/abs/1803.09142}.

\bibitem{HondaHuang} Honda, K., Huang, Y. \textit{Convex hypersurface theory in contact topology}. arXiv preprint \textit{arXiv:1907.06025}, 2019. \url{http://arxiv.org/abs/1907.06025}.

\bibitem{McDuff} McDuff, D. \textit{Symplectic manifolds with contact type boundaries}. Invent. Math. 103(1): 651 - 671, 1991.

\bibitem{MS} McDuff, D., Salamon, D. \textit{Introduction to Symplectic Topology: Third Edition}, Oxford University Press, 2017. 

\bibitem {Mori} Mori, A. \textit{On the violation of Thurston-Bennequin inequality for a certain non-convex hypersurface}. arXiv preprint arXiv:1111.0383, 2011. \url{https://arxiv.org/pdf/1111.0383.pdf}.

\bibitem {Mori2} Mori, A. \textit{Reeb Foliations on $S^5$ and contact $5$-manifolds violating the Thurston-Bennequin inequality}. arXiv preprint arXiv:0906.3237, 2009. \url{https://arxiv.org/abs/0906.3237}.

\bibitem{Palis} Palis, J., de Melo, W. \textit{Geometric Theory of Dynamical Systems}. Springer, New York, NY, 1982. 


\bibitem{Sackel} Sackel, K. \textit{Getting a handle on contact manifolds}. arXiv preprint \textit{arXiv:1905.11965}, 2019. \url{https://arxiv.org/abs/1905.11965}.



\end{thebibliography}

\end{document}